\newtheorem{theorem}{Theorem}
\newtheorem{lemma}{Lemma}
\def\subparagraph{\@startsection{paragraph}{5}{2\parindent}{0ex plus 0.1ex minus 0.1ex}%
{0ex}{\normalfont\small\itshape}}%
\def\URL#1#2{\@ifundefined{href}{#2}{\href{#1}{#2}}}
\def\UrlOrds{\do\*\do\-\do\~\do\'\do\"\do\-}%
\g@addto@macro{\UrlBreaks}{\UrlOrds}
\renewenvironment{abstract}
	{\trivlist\item[]\leftskip0pt\par\vskip4pt\noindent
  	\textbf{\abstractname}\mbox{\null}\\}
	{\par\noindent\endtrivlist}
\def\keywords#1{\par\medskip\par\noindent\textbf{Keywords}: #1\par}
\date{} \emergencystretch 8pt
\def\author#1{\gdef\@author{\hskip-\tabcolsep%
	\parbox{\textwidth}{\raggedright\bfseries#1\\[1pc]}}}
\def\address[#1]#2{\g@addto@macro\@author{\\\hskip-\tabcolsep\parbox{\textwidth}{\raggedright%
	\normalsize\normalfont\textsuperscript{#1}#2}}}
\let\addresslink\textsuperscript
\def\correspondence#1{\g@addto@macro\@author{\\\hskip-\tabcolsep\parbox{\textwidth}{\raggedright%
	\vspace*{10pt}\normalsize\normalfont~\\#1~\\[12pt]}}}
\def\email#1{\g@addto@macro\@author{\\\hskip-\tabcolsep\parbox{\textwidth}{\raggedright%
	\normalsize\normalfont Emails: #1}}}
\def\title#1{\gdef\@title{\vspace*{-30pt}%
	\raggedright\textbf{\@journaltitle}~\\%
  \raggedright\bfseries\ifx\@articleType\@empty\vspace*{20pt}\else%
  \vspace*{20pt}\@articleType\vspace*{20pt}\\\fi#1}}
\let\@journaltitle\@empty \def\journaltitle#1{\gdef\@journaltitle{{\normalfont\itshape#1}}}
\let\@articleType\@empty \def\articletype#1{\gdef\@articleType{{\normalfont\itshape#1}}}
\let\@runningHead\@empty \def\RunningHead#1{\gdef\@runningHead{{\normalfont #1}}}
\begin{document}

\title{Backward Stochastic Differential Equations (BSDEs) Using Infinite-dimensional Martingales with Subdifferential Operator}

\author{%
		Pei Zhang\addresslink{1,2},
  	Adriana Irawati Nur Ibrahim\addresslink{1} and
  	Nur Anisah Mohamed\addresslink{1}
    }
		
\address[1]{Institute of Mathematical Sciences, Faculty of Science, Universiti Malaya, 50603 Kuala Lumpur, Malaysia}
\address[2]{School of Mathematics and Statistics, Suzhou University, Suzhou 234000, Anhui, China}

\correspondence{Correspondence should be addressed to 
    	Adriana Irawati Nur Ibrahim: adrianaibrahim@um.edu.my}

\email{s2023288siswa.um.edu.my (Pei Zhang), nuranisah mohamed@um.edu.my (Nur Anisah Mohamed)}%

\RunningHead{Complexity}
\maketitle 

\begin{abstract}
In this paper, we focus on a family of backward stochastic differential equations (BSDEs) with subdifferential operators that are driven by infinite-dimensional martingales. We shall show that the solution to such infinite-dimensional BSDEs exists and is unique. The existence and uniqueness of the solution is established using Yosida approximations. Furthermore, as an application of the main result, we shall show that the backward stochastic partial differential equation driven by infinite-dimensional martingales with a continuous linear operator has a unique solution under the condition that the function $F$ equals zero.
\keywords{backward stochastic differential equations (BSDEs); variational inequalities; martingales; subdifferential operators}
\end{abstract}
    
\section{Introduction}
In 1990, Pardoux and Peng \cite {Par90} firstly proposed the general nonlinear case of backward stochastic differential equations (BSDEs): let $(\xi,f)$ include a square integrable random variable $\xi$ and a progressively measurable process $f$, and let ${W_t}_{(0\leq t\leq T)}$ be a  $k$-dimensional Brownian process. Then the existence and uniqueness of an adapted process $(Y,Z)$ is a solution to the
following type of BSDEs: 
\begin{eqnarray*}
Y_t=\xi+\int_{t}^{T}f(s,Y_s,Z_s)\,{\rm d}s-\int_{t}^{T}Z_s\,{\rm d}W_s, \ \ 0\leq t\leq T.
\end{eqnarray*}
Since then, many scholars have begun to carry out more in-depth researches on BSDEs. As a result, BSDEs have developed rapidly, whether in their own development or in many other related fields such as financial mathematics, stochastic control, biology, financial futures market, theory of partial differential equations, and stochastic games. Reference can be made to Karoui $et\ al.$\cite{Kar97},  Hamadene and  Lepeltial \cite{Ham95}, Peng \cite{Pen97}, Ren and Xia \cite{Ren06} among others.
Among the BSDEs, Pardoux and R\u{a}\c{s}canu \cite{Par98} mulled over backward stochastic differential equations involving a subdifferential operator, which are also dubbed Backward Stochastic Variational Inequalities (BSVIs), and also utilized them with the Feymann-Kac formula to represent a solution of the multivalued parabolic Partial Differential Equations (PDEs).
Pardoux and R\u{a}\c{s}canu \cite{Par99} showed that the result can be easily expanded to a spatial setting Hilbert by giving examples of backward stochastic partial differential equations with solution.
Diomande and Maticiuc \cite{Dio14} used a mixed Euler-Yosida scheme, to prove the existence of the solution of the multivalued BSDE with time-delayed generators; Maticiuc and Rotenstein \cite{Mat12} had provided the numerical results of the multivalued BSDEs. Boufoussi \cite {Bou07} proved a type of generalized backward doubly stochastic differential equation with a symmetric backward stochastic It\^{o} integral has an existence and uniqueness solution.
Wang and Yu \cite{Wang22} explored this problem with anticipated type generalized backward doubly stochastic differential equation.
Instead of normal Brownian motion as interference source, Yang $et.\ al.$ \cite{Yan17} showed the existence and uniqueness of the solution for a type of BSDEs driven by finite G-Brownian process with subdifferential operator by using the Method of Approximation of Moreau-Yosida.

Some authors have also obtained results in the type spaces of ${L^p}$, among which Briand $et\  al.$ \cite{Bri03} obtained a priori estimate and demonstrate the existence and uniqueness of solutions in $L^p, p>1$. Under normal conditions, Fan $et.\ al.$ \cite{Fan16} studied bounded solutions, $L^{p}(p > 1)$ solutions and $L^1$ solutions of one-dimensional equation.

Instead of focusing on one-dimensional BSDEs ($Y \in$ $\mathbb{R}$), it is possible to extend to multi-dimensional settings. Bahlali \cite{Bah02} had proven the existence, uniqueness and stability of the solution for multi-dimensional BSDEs with a local monotonous coefficient. Maticiuc and R\u{a}\c{s}canu \cite{Mat15} extended the existence and uniqueness results of the previous work of Pardoux and R\u{a}\c{s}canu \cite{Par99} by supposing a weaker boundedness condition for the generator and by considering random time interval $[0, T]$, the Lebesgue–Stieltjes integral terms,where fixed convex boundary is induced by the subdifferential of an appropriate lower semicontinuous convex function.
In other words, the infinite dimensional condition, and in the ${L^p (p\geq2)}$ space, a generalized version of the multivalued BSDE taken into account at a random time interval remove bracket. 
 R\u{a}\c{s}canu \cite{Ras18} proved that in the case of $p\geq2$, the variational solution is a strong one since they have certified the uniqueness of that solution.

Likewise, martingale has wider application than the Brownian Motion. The properties of the martingale described may not hold true, and one generally needs to enter more martingale into the response. 
 Hamaguchi \cite{Ham20} proposed an endless dimensional BSDE driven by a barrel shaped martingale, demonstrate the presence and uniqueness of the arrangement of such boundless dimensional BSDEs and show the grouping of arrangements of relating BSDEs. El Karoui and Huang \cite{Kar97} studied BSDEs driven by finite dimensions martingales. Al-Hussein \cite{Al09} demonstrated an aftereffect of presence, uniqueness of solution of a BSDEs which driven by a limitless dimensional martingale, and applied the outcome to track down a special answer for a regressive stochastic fractional differential condition in boundless measurements.
Because the case of $p=2$ is more common and $p>2$ is more complex in $L^p$ space, hence, it is necessary to study BSDEs with sub-differential operator, whose drives are infinite-dimensional martingales in $L^2$ space.

Considering subdifferential operator and martingale simultaneously, Nie \cite{Nie15} concentrated on the existence and uniqueness of the solution to a multi-dimensional forward-backward stochastic differential equation (FBSDE) with subdifferential operator in the backward condition, moreover the backward equation is reflected on the boundary of a closed convex area. But as far as we know, the research on infinite dimensional martingale has not been done before. 

The purpose of this paper is to consider a class of BSDEs driven by infinite dimensional martingales with subdifferential operator  of the following type:
\begin{equation}\label{eq1}
\left\{
\begin{array}{ll}
  \displaystyle \,{\rm d}Y_{t}+F(t,Y_{t},Z_{t}\mathcal{Q}_{t}^{1/2})\,{\rm d}{t}\in\partial\varphi(Y_t)\,{\rm d}{t}+Z_{t}\,{\rm d}{M_{t}}+\,{\rm d}{N_{t}},\ 0\leq t\leq T,\\
\\
 \displaystyle Y(T)=\xi.
\end{array} \right.
  \end{equation}

Eq. (\ref{eq1}) is written in the context of a completion probability space $(\Omega,\mathcal{F},P)$ with a continuous filter \{$\mathcal{F}_{t}$\}$_{t \geq 0}$ on the right side. $\xi$ is a random variable, given as a final value, the function $F$ is a mapping from $\Omega\times[0,\infty)\times H\times L^{2}(H)$ to $H$, $M$ is a continuous martingale in the space of $H$, besides ${\mathcal{Q}}_{M}$ is a predictable process that captures value from the space $L_{2}(H)$ of nuclear operators on $H$, that was introduced by Al-Hussein \cite{Al11}, and will be explain in the next section. 

The main aim of this paper is to find an adapted processes $(Y,Z,U,N)$ in a proper space such that BSDE in the Eq. (\ref{eq1}) hold. Then, it allows us to establish the uniqueness of the viscosity solution of a type of non-local variational inequality.
The following is a list of how this paper is organized. Section 2 introduces certain fundamental notations, assumptions, and preliminaries, as well as the a priori estimation of a series of penalized approximations to the equations. In Section 3, we verify the existence and uniqueness of the BSDE solution using the Yosida approximation approach. In Section 4, an example is provided for illustration of the proposed methodology.
\section{Preliminaries}
Al-Hussein \cite{Al11} established the concepts of space and martingales as follows:
Denote that $\mathcal{M}_{[0,T]}^2(H)$ the vector space of the cadlag square-integrable martingales \{$M(t),0\leq t\leq T$\},
and taking values in the space of $H$, moreover $\mathbb{E}[|{M(t)|_H}^2]<\infty$ for each $t\in[0,T]$.
A Hilbert space with respect to the inner product $(M,N)\mapsto\mathbb{E}[\langle M(T),N(T)\rangle_{H}]$
if $\mathbb{P}$-equivalence classes have been established. Let $\mathcal{M}_{[0,T]}^{2,c}(H)$ just be Hilbert subspace containing continuous square integrable martingale in $H$.
These are  $very\ strongly$   $orthogonal$ for $M,N\in\mathcal{M}_{[0,T]}^2(H)$, for all $[0,T]$-valued stopping times $u$, if we can satisfy
 $\mathbb{E}[M(u)\bigotimes N(u)]=\mathbb{E}[M(0)\bigotimes N(0)]$. In particular, if $N(0)=0$, $\mathbb{E}[M(u)\bigotimes N(u)]=0$, then $M$ and $N$ are very strongly orthogonal.

Let $M\in\mathcal{M}_{[0,T]}^2(H)$, and the process $\langle M\rangle$ is the predictable quadratic variation of $M$
as well as ${\mathcal{Q}}_{M}$ be a predicted process that takes values in the set of positive symmetric elements, 
that is linked to a Dol\'{e}ans measure of $M\bigotimes N$.
We define $\langle\langle M\rangle\rangle_{t}=\int_{0}^{t}\mathcal{Q}_{M}(s)\,{\rm d}\langle M\rangle_{s},$ and assume there exists a predictable process $\mathcal{Q}(t,\cdot)$ which is a symmetric positive definite nuclear operator on $H$ and satisfies $\langle\langle M\rangle\rangle_{t}=\int_{0}^{t}\mathcal{Q}(s)\,{\rm d}s.$

Under the space $L^{*}(H;\mathcal{P},M)$ of processes $\Phi$, we first consider $\mathcal{E}(L(H))$ be the space of predictable simple processes, let
 $\Lambda^{2}(H;\mathcal{P},M)$ be the closure of $\mathcal{E}(L(H))$ in $L^{*}(H;\mathcal{P},M)$, after that the space $\Lambda^{2}(H;\mathcal{P},M)$ be one Hilbert subspace of $L^{*}(H;\mathcal{P},M)$.
Additionally, the stochastic integral $\int \Phi\,{\rm d}M $ is defined for an element $\Phi \in\Lambda^{2}(H;\mathcal{P},M)$, which remain with $\mathcal{M}_{[0,T]}^2(H)$ also fulfills the condition

\begin{eqnarray*}
\mathbb{E}\left|{\int_{0}^{T}\Phi(t)\,{\rm d}M(t) }\right| _{H}^2
 =\mathbb{E}\int_{0}^{T}|\Phi(t)\circ \mathcal{Q}_{M}^{\left. 1 \middle / 2 \right.}(t)|_{L_{2}(H)}^2\,{\rm d}\langle M\rangle_{t}.\end{eqnarray*}

Consider the following spaces \cite{Al09}:

\begin{flalign*}
L_{\mathcal{F}} ^2 (0,T;H):= \Bigg\{\phi:[0,T]\times\Omega\rightarrow H, \phi\mbox{ is predictable also satisfies } \mathbb{E}\int_{0}^{T}|\Phi(t)|_{H}^2\,{\rm d}\langle M\rangle_{t}<\infty \Bigg\};
\end{flalign*}

\begin{flalign*}
\mathcal{S}^2(H):= \Bigg\{\phi:[0,T]\times\Omega\rightarrow H, \phi\mbox{ is continuous, adaptable and satisfies }
\mathbb{E}\Bigg[\sup_{0\leq t \leq
T}|\phi(t)|_{H}^2\Bigg]<\infty \Bigg  \} .
\end{flalign*}

As Al-Husse mentioned in reference \cite{Al09}, $\mathcal{S}^2(H)$ is a separable Banach space which is conforming to the norm
\begin{eqnarray*}
\|\phi\|_\mathcal{S}^2(H)=\Bigg(\mathbb{E}\Bigg[\sup_{0\leq t \leq
T}|\phi(t)|_{H}^2\Bigg]\Bigg)^{\left. 1 \middle / 2 \right.}.\end{eqnarray*}

Let $M\in\mathcal{M}_{[0,T]}^2(H)$ be $M(0)=0$ and consider the following assumptions:
 \begin{enumerate}
  \item[(H1)] The function $F:\Omega\times[0,\infty)\times H\times L^{2}(H)\rightarrow H$ fulfills the requirement $\alpha\in\mathbb{R},\beta,\gamma\geq0,$
also let $\eta$ be one $\mathcal{F}_{t}-$progressively measurable process:
  \item[(H2)] \begin{enumerate}
                \item[(i)]  $F(\cdot,\cdot,y,z)$ is  $\mathcal{F}_{t}-$ progressively measurable,
                \item[(ii)]  $y\mapsto F(t,y,z)$ is continuous, $dp\times dt$  a.e. ,
                \item[(iii)]  $\forall y,y^\prime \in H$ and $\forall z,z^\prime \in L_{2}(H)$

    \ \ \ \  $(F(t,y,z)-F(t,y^\prime,z),y-y^\prime) \leq \alpha |y-y^\prime |^2,$

                \item[(iv)] $|F(t,y,z)-F(t,y,z^\prime)|\leq \beta \|z-z^\prime\|$,
                \item[(iv)] $|F(t,y,0)|\leq \eta_{t}+\gamma |y|$,
                 \item[(vi)]  $\mathbb{E}[\int_{0}^{T}|F(t,0,0)|_{H}^2\,{\rm d}{t}]<\infty$.
              \end{enumerate}
  \item[(H3)]\begin{enumerate}
                \item[(i)]  $\varphi$ is just a valid convex function,
                \item[(ii)]  $\varphi(y)\geq \varphi(0)=0$.
              \end{enumerate}
  \item [(H4)]\begin{enumerate}
                \item[(i)]  $\xi \in L^{2}(\Omega,\mathcal{F}_{T},\mathbb{P};H)$,
                \item[(ii)]  $\mathbb{E}[e^{\lambda t}(|\xi|^{2}+|\varphi(\xi)|)]<\infty$,
                \item[(iii)] $\mathbb{E}[\int_{0}^{t}e^{\lambda t}|\eta(s)^{2}|\,{\rm d}{s}]<\infty$,    \ \ \ \ here $\lambda>2\alpha+\beta^2$.
              \end{enumerate}
  \item[(H5)] Every $H$-valued square integrable martingale with filtering $\{\mathcal{F}_{t},0\leq t\leq T\}$ has a continuous version.
\end{enumerate}

We introduce $\varphi$, that is a subdifferential of the  l.s.c. convex function from the space $H$ to $\mathbb{R}$.
$\partial\varphi$ is a multivalued function from the space $H$ to $H$, which was presented by Pardoux and R\u{a}\c{s}canu \cite {Par90}.

For any $u\in H$,
\begin{eqnarray*}
\partial\varphi(u)=\{h\in H:(h,v-u)+\varphi(u)\leq\varphi(v), \forall v\in H\}.\end{eqnarray*}
We let Dom($\partial\varphi$) be the set of $u\in H$ such that $\partial\varphi(u)$ is not empty, and define $(u,v)\in\partial\varphi$ to imply that $u\in$ Dom($\partial\varphi$) and $v\in\partial\varphi(u) $.

The function $\varphi$ is then approximated by the convex $C^1$-function $\varphi_\varepsilon$, $\varepsilon\in (0,1]$ concept was defined by Pardoux and R\u{a}\c{s}canu \cite{Par98} as
\begin{eqnarray*}
\varphi_{\varepsilon}(u)=\inf{\bigg\{\frac{1}{2}|u-v|^{2}+\varepsilon\varphi(v):v\in H \bigg\}}
=\frac{1}{2}|u-J_{\varepsilon}u|^{2}+\varepsilon\varphi(J_{\varepsilon}u) ,
\end{eqnarray*}
where $J_{\varepsilon}u=(I+\varepsilon\partial\varphi)^{-1}(u).$ For all $u,v\in H,\varepsilon>0,$ the properties of approximation presented by Barbu \cite{Bar76} are given by

\begin{eqnarray*}
\begin{array}{ll}
\frac{1}{\varepsilon}D\varphi_{\varepsilon}(u)=\frac{1}{\varepsilon}\partial\varphi_{\varepsilon}(u)
=\frac{1}{\varepsilon}(u-J_{\varepsilon}u)\in \partial\varphi(J_{\varepsilon}u),\\\\
|\varphi(J_{\varepsilon}u)-\varphi(J_{\varepsilon}v)|\leq |u-v|, {\rm and}
\lim\limits_{\varepsilon\to 0}J_{\varepsilon}u= Pr_{\overline{Dom\varphi}}(u).
\end{array}
\end{eqnarray*}

Hence, for all $u,v\in H, \varepsilon>0, \varepsilon^\prime >0,$  we have $0\leq\varphi_\varepsilon\leq (D\varphi_\varepsilon(u),u)$ where
\begin{equation}\label{eq2}
\left(\frac{1}{\varepsilon}D\varphi_{\varepsilon}(u)-\frac{1}{\varepsilon^\prime}D\varphi_{\varepsilon^\prime}(v)\right)\geq -\left(\frac{1}{\varepsilon}+\frac{1}{\varepsilon^\prime}\right)|D\varphi_{\varepsilon}(u)|\times|D\varphi_{\varepsilon^\prime}(v)|.
\end{equation}

Consider the approximating equation
\begin{equation}\label{eq3}
Y_{t}^{\varepsilon}+\frac{1}{\varepsilon}\int_{t}^{T}D\varphi_{\varepsilon}(Y_{s}^{\varepsilon})\,{\rm d}s
= \xi+\int_{t}^{T}F(t,Y_{s}^{\varepsilon},Z_{s}^{\varepsilon}\mathcal{Q}_{s}^{1/2})\,{\rm d}{s}
-\int_{t}^{T}Z_{s}^{\varepsilon}\,{\rm d}{M_s}-\int_{t}^{T}\,{\rm d}{N_{s}^{\varepsilon}}.
\end{equation}
As a result of the conclusion of Al-Hussein \cite{Al09}, for this equation exists a unique solution $(Y^\varepsilon,Z^\varepsilon,N^\varepsilon)\in S_{[0,T]}^{2}(H)\times M_{[0,T]}^{2}(L^{2}(H))\times M_{[0,T]}^{2}(H).$

\begin{lemma}\label{lemma1}
Let assumptions (H1)-(H5) be satisfied, then for all $0\leq a\leq T$,

\begin{equation}\label{eq4}
\mathbb{E}\left[\sup_{a\leq t\leq T}{\rm e}^{\lambda t}|Y_t^\varepsilon|^2
+ \int_{a}^{T}{\rm e}^{\lambda t}(|Y_s^\varepsilon|^2 + \|Z_{s}^{\varepsilon}\mathcal{Q}_{s}^{1/2}\|^2)\,{\rm d}s+\int_{a}^{T}{\rm e}^{\lambda s}\,{\rm d}\langle N\rangle_s\right]\\
\leq C\Gamma_{1}(a,T),
\end{equation}
where
$$\Gamma_{1}(a,T)=\mathbb{E}\Bigg[{\rm e}^{\lambda T}|\xi|^2+\int_{a}^{T}{\rm e}^{\lambda s}|F(s,0,0)|^2\,{\rm d}s\Bigg].$$
\end{lemma}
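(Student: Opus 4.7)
The plan is to apply the Itô formula to $e^{\lambda t}|Y_t^\varepsilon|^2$ on $[a,T]$ and exploit the sign of the subdifferential term together with assumption (H2) and the choice $\lambda>2\alpha+\beta^2$ from (H4)(iii). Specifically, writing Eq.~(\ref{eq3}) in differential form,
\begin{equation*}
\,{\rm d}Y_t^\varepsilon = -F(t,Y_t^\varepsilon,Z_t^\varepsilon\mathcal{Q}_t^{1/2})\,{\rm d}t + \tfrac{1}{\varepsilon}D\varphi_\varepsilon(Y_t^\varepsilon)\,{\rm d}t + Z_t^\varepsilon\,{\rm d}M_t + \,{\rm d}N_t^\varepsilon,
\end{equation*}
Itô's formula will produce, after integration from $t$ to $T$ and rearrangement, the quadratic variation terms $\int_t^T e^{\lambda s}\|Z_s^\varepsilon\mathcal{Q}_s^{1/2}\|^2\,{\rm d}\langle M\rangle_s$ and $\int_t^T e^{\lambda s}\,{\rm d}\langle N\rangle_s$, a drift term $-\lambda\int_t^T e^{\lambda s}|Y_s^\varepsilon|^2\,{\rm d}s$, an $F$-bracket $2\int_t^T e^{\lambda s}(Y_s^\varepsilon,F(s,Y_s^\varepsilon,Z_s^\varepsilon\mathcal{Q}_s^{1/2}))\,{\rm d}s$, a nonnegative subdifferential term $\tfrac{2}{\varepsilon}\int_t^T e^{\lambda s}(Y_s^\varepsilon,D\varphi_\varepsilon(Y_s^\varepsilon))\,{\rm d}s\ge 0$ (which I drop to the right-hand side), and the local martingales $2\int_t^T e^{\lambda s}(Y_s^\varepsilon,Z_s^\varepsilon\,{\rm d}M_s)$ and $2\int_t^T e^{\lambda s}(Y_s^\varepsilon,{\rm d}N_s^\varepsilon)$.

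Next I would split the $F$-bracket through $F(s,0,0)$: using (H2)(iii) with $y'=0$ gives $(Y_s^\varepsilon,F(s,Y_s^\varepsilon,z)-F(s,0,z))\le\alpha|Y_s^\varepsilon|^2$, and (H2)(iv) with Young's inequality gives $2(Y_s^\varepsilon,F(s,0,Z_s^\varepsilon\mathcal{Q}_s^{1/2})-F(s,0,0))\le \beta^2\delta^{-1}|Y_s^\varepsilon|^2 + \delta\|Z_s^\varepsilon\mathcal{Q}_s^{1/2}\|^2$; the residual $2(Y_s^\varepsilon,F(s,0,0))$ is handled by another Young's inequality as $|Y_s^\varepsilon|^2 + |F(s,0,0)|^2$. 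Picking $\delta\in(0,1)$ close enough to $1$ so that the $\|Z\|^2$ coefficient remains strictly positive and $\lambda>2\alpha+\beta^2\delta^{-1}+1$ (achievable since $\lambda>2\alpha+\beta^2$), the $|Y_s^\varepsilon|^2\,{\rm d}s$ integrand on the left-hand side acquires a strictly positive coefficient. Taking conditional expectation then kills both local martingales (a standard localization argument will first truncate by stopping times and a uniform integrability check), producing
\begin{equation*}
\mathbb{E}\!\left[e^{\lambda t}|Y_t^\varepsilon|^2 + \int_t^T e^{\lambda s}\!\left(|Y_s^\varepsilon|^2 + \|Z_s^\varepsilon\mathcal{Q}_s^{1/2}\|^2\right){\rm d}s + \int_t^T e^{\lambda s}\,{\rm d}\langle N\rangle_s \right] \le C\,\Gamma_1(a,T).
\end{equation*}

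Finally, to pass from a pointwise bound to the supremum bound in (\ref{eq4}), I would return to the pre-expectation identity and apply the Burkholder--Davis--Gundy inequality to the two martingale integrals, using $\sqrt{\langle 2\int e^{\lambda\cdot}(Y^\varepsilon,Z^\varepsilon\,{\rm d}M)\rangle_T}\le 2(\sup_{s}e^{\lambda s/2}|Y_s^\varepsilon|)(\int_a^T e^{\lambda s}\|Z_s^\varepsilon\mathcal{Q}_s^{1/2}\|^2{\rm d}\langle M\rangle_s)^{1/2}$ and the analogous bound for $N^\varepsilon$, then absorbing the $\sup$ term into the left-hand side via $2ab\le \tfrac{1}{2}a^2+2b^2$. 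The main obstacle is bookkeeping: the constants from Young's inequality, the localization step needed to justify taking expectation of the stochastic integrals, and ensuring the positivity margin $\lambda-2\alpha-\beta^2>0$ survives the Young splittings are delicate but routine; the subdifferential contribution itself is easy to discard since $(Y_s^\varepsilon,D\varphi_\varepsilon(Y_s^\varepsilon))\ge\varphi_\varepsilon(Y_s^\varepsilon)\ge 0$ by (H3)(ii).
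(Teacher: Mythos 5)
Your route is the paper's own: apply It\^o's formula to ${\rm e}^{\lambda t}|Y_t^\varepsilon|^2$, discard the nonnegative term $\frac{2}{\varepsilon}\int_t^T {\rm e}^{\lambda s}(Y_s^\varepsilon,D\varphi_\varepsilon(Y_s^\varepsilon))\,{\rm d}s$, split the bracket $2(Y_s^\varepsilon,F(s,Y_s^\varepsilon,Z_s^\varepsilon\mathcal{Q}_s^{1/2}))$ using the monotonicity (H2)(iii) and Lipschitz (H2)(iv) assumptions plus Young's inequality, take expectations, and finish with Burkholder--Davis--Gundy to capture the supremum; that you pass through the intermediate point $F(s,0,z)$ rather than the paper's $F(s,y,0)$ is immaterial. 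But one step fails as written: your Young weights give the $|Y_s^\varepsilon|^2$ term the total coefficient $2\alpha+\beta^2\delta^{-1}+1$, the $+1$ coming from $2(Y_s^\varepsilon,F(s,0,0))\le |Y_s^\varepsilon|^2+|F(s,0,0)|^2$, and you then assert that $\lambda>2\alpha+\beta^2\delta^{-1}+1$ is ``achievable since $\lambda>2\alpha+\beta^2$.'' It is not: $\lambda$ is a fixed datum of hypothesis (H4)(iii) --- the integrability of $\xi$, $\varphi(\xi)$, $\eta$ and the statement of (\ref{eq4}) itself are tied to this particular $\lambda$ --- so if the margin satisfies $\lambda-(2\alpha+\beta^2)\le 1$ (say $\lambda=2\alpha+\beta^2+\tfrac12$), no choice of $\delta\in(0,1)$ rescues your inequality, the $|Y_s^\varepsilon|^2$ coefficient on the left-hand side becomes negative, and the absorption collapses.

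The repair is routine and is exactly what the paper does: put a small parameter in the zeroth-order Young step too, $2(y,F(s,0,0))\le r|y|^2+\frac{1}{r}|F(s,0,0)|^2$, which combined with the other two pieces yields $2(y,F(s,y,z))\le (2\alpha+(1+r)\beta^2+r)|y|^2+\frac{1}{1+r}\|z\|^2+\frac{1}{r}|F(s,0,0)|^2$; choosing $0< r\le \frac{\lambda-(2\alpha+\beta^2)}{1+\beta^2}\wedge 1$ keeps both the $|Y|^2$ coefficient $\lambda-2\alpha-\beta^2-r(1+\beta^2)$ and the $\|z\|^2$ margin $\frac{r}{r+1}$ strictly positive, at the harmless price of the factor $\frac{1}{r}$ multiplying $\int{\rm e}^{\lambda s}|F(s,0,0)|^2\,{\rm d}s$, which is absorbed into the constant $C$ of $\Gamma_1(a,T)$. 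One further bookkeeping note: your final absorption $2ab\le\frac12 a^2+2b^2$, applied to \emph{each} of the two martingale terms, contributes $\frac12\,\mathbb{E}\sup$ twice, i.e.\ the full supremum, so nothing is actually absorbed (the paper's closing display has the same cosmetic defect); use weight $\frac14$ on each term instead, and then conclude using the already-established bounds on $\mathbb{E}\int_a^T{\rm e}^{\lambda s}\|Z_s^\varepsilon\mathcal{Q}_s^{1/2}\|^2\,{\rm d}s$ and $\mathbb{E}\int_a^T{\rm e}^{\lambda s}\,{\rm d}\langle N\rangle_s$ from the expectation step. With these two adjustments --- both within your stated strategy --- the proof is complete and coincides with the paper's.
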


\begin{proof} Firstly, It\^{o}'s formula for ${\rm e}^{\lambda t}|Y_t^\varepsilon|^2$ yields

\begin{eqnarray*}
{\rm e}^{\lambda t}|Y_t^\varepsilon|^2+\int_{t}^{T}{\rm e}^{\lambda s}(\lambda|Y_s^\varepsilon|^2 + \|Z_{s}^{\varepsilon}\mathcal{Q}_{s}^{1/2}\|^2)\,{\rm d}s+ \int_{t}^{T}{\rm e}^{\lambda s}\,{\rm d}\langle N\rangle_s
+\frac{2}{\varepsilon}\int_{t}^{T}{\rm e}^{\lambda s}(Y_s^\varepsilon,D\varphi_{\varepsilon}(Y_{s}^{\varepsilon})\,{\rm d}s\qquad\quad\\
={\rm e}^{\lambda T}|\xi|^2
+2\int_{t}^{T}{\rm e}^{\lambda s}(Y_s^\varepsilon,F(t,Y_{s}^{\varepsilon},Z_{s}^{\varepsilon}\mathcal{Q}_{s}^{1/2})\,{\rm d}{s}
-2\int_{t}^{T}{\rm e}^{\lambda s}(Y_s^\varepsilon,Z_{s}^{\varepsilon}\,{\rm d}{M_s})-2\int_{t}^{T}{\rm e}^{\lambda s}(Y_s^\varepsilon,\,{\rm d}{N_s^\varepsilon}).\quad
\end{eqnarray*}

Then applying Schwarz's inequalites and considering $\left(\frac{1}{\varepsilon}D\varphi_{\varepsilon}(y),y\right)\geq 0$,
 \begin{eqnarray*}
2(y,F(s,y,z))
&=&2(y,F(s,y,z)-F(s,y,0)+F(s,y,0)-F(s,0,0)+F(s,0,0))\\
&\leq & 2\beta(y,z)+2\alpha|y|^2+2(y,F(s,0,0))\\
&\leq & (2\alpha+(1+r)\beta^2 +r)|y|^2 +\frac{1}{1+r}\|z\|^2 +\frac{1}{r}|F(s,0,0)|^2,
\end{eqnarray*}
we choose $\lambda>2\alpha+\beta^2,0\leq r\leq \frac{\lambda-(2\alpha+\beta^2)}{1+\beta^2}\wedge 1.$
Hence,
\begin{eqnarray*}
{\rm e}^{\lambda t}|Y_t^\varepsilon|^2+\int_{t}^{T}{\rm e}^{\lambda s}[(\lambda-2\alpha-\beta^2-r(1+\beta^2))|Y_s^\varepsilon|^2
+\frac{r}{r+1}\|Z_{s}^{\varepsilon}\mathcal{Q}_{s}^{1/2}\|^2]\,{\rm d}{s}+\int_{t}^{T}{\rm e}^{\lambda s}\,{\rm d}\langle N\rangle_s\\
\leq {\rm e}^{\lambda T}|\xi|^2+\frac{1}{r}\int_{t}^{T}{\rm e}^{\lambda s}|F(s,0,0)|^2\,{\rm d}s
-2\int_{t}^{T}{\rm e}^{\lambda s}(Y_s^\varepsilon,Z_{s}^{\varepsilon}\,{\rm d}{M_s})
-2\int_{t}^{T}{\rm e}^{\lambda s}(Y_s^\varepsilon,\,{\rm d}{N_s}).
\end{eqnarray*}
We obtain
\begin{eqnarray*}
\sup_{a\leq t\leq T}{\rm e}^{\lambda t}|Y_t^\varepsilon|^2
&\leq& {\rm e}^{\lambda T}|\xi|^2+\frac{1}{r}\int_{t}^{T}{\rm e}^{\lambda s}|F(s,0,0)|^2\,{\rm d}s
+2\sup_{a\leq t\leq T}\left|\int_{t}^{T}{\rm e}^{\lambda s}(Y_s^\varepsilon,Z_{s}^{\varepsilon}\,{\rm d}{M_s})\right|\\
&&+2\sup_{a\leq t\leq T}\left|\int_{t}^{T}{\rm e}^{\lambda s}(Y_s^\varepsilon,\,{\rm d}{N_s})\right|.
\end{eqnarray*}
Then, we take the expectation in the above inequality using Burkholder-Davise-Gundy's inequality,
\begin{eqnarray*}
\mathbb{E}\left(\sup_{a\leq t\leq T}{\rm e}^{\lambda t}|Y_t^\varepsilon|^2\right)
&\leq& C_1 +2\mathbb{E}\left(\sup_{a\leq t\leq T}\left|\int_{t}^{T}{\rm e}^{\lambda s}(Y_s^\varepsilon,Z_{s}^{\varepsilon}\,{\rm d}{M_s})\right|\right)\\
&&+2\mathbb{E}\left(\sup_{a\leq t\leq T}\left|\int_{t}^{T}{\rm e}^{\lambda s}(Y_s^\varepsilon,\,{\rm d}{N_s})\right|\right)\\
&\leq& C_1+\frac{1}{2}\mathbb{E}\left(\sup_{a\leq t\leq T}{\rm e}^{\lambda t}|Y_t^\varepsilon|^2\right)
+C_{2}\mathbb{E}\int_{a}^{T}{\rm e}^{\lambda t}\|Z_{s}^{\varepsilon}\mathcal{Q}_{s}^{1/2}\|^2\,{\rm d}{s}\\
&&+\frac{1}{2}\mathbb{E}\left(\sup_{a\leq t\leq T}{\rm e}^{\lambda t}|Y_t^\varepsilon|^2\right)
+C_{3}\mathbb{E}\int_{a}^{T}{\rm e}^{\lambda t}\,{\rm d}\langle N\rangle_s.\\
\end{eqnarray*}
Hence, the proof is completed.
\end{proof}

\begin{lemma}\label{lemma2}
Let assumptions (H1)-(H5) be satisfied, then there exists a positive constant $C$ such that for $0\leq a\leq T$,
\begin{eqnarray}
&&\mathbb{E}\int_{a}^{T}{\rm e}^{\lambda s}\left(\frac{1}{\varepsilon}D\varphi_{\varepsilon}(Y_{s}^{\varepsilon})\right)^{2}\,{\rm d}{s}\leq C\Gamma_{2}(a,T),\label{eq5}\\
&&\mathbb{E}{\rm e}^{\lambda a}\varphi(J_{\varepsilon}Y_{a}^{\varepsilon})
+\mathbb{E}\int_{a}^{T}{\rm e}^{\lambda s}\varphi(J_{\varepsilon}Y_{a}^{\varepsilon})\,{\rm d}{s}\leq C\Gamma_{2}(a,T),\label{eq6}\\
&&\mathbb{E}({\rm e}^{\lambda a}|Y_{a}^{\varepsilon}-J_{\varepsilon}Y_{a}^{\varepsilon}|^2)
\leq\varepsilon^{2}C\Gamma_{2}(a,T),\label{eq7}
\end{eqnarray}

where \begin{eqnarray*}\Gamma_{2}(a,T)=\mathbb{E}\Bigg[{\rm e}^{\lambda T}(|\xi|^{2}+\varphi(\xi))+\int_{a}^{T}{\rm e}^{\lambda s}|\eta(s)|^2\,{\rm d}{s}\Bigg].\end{eqnarray*}
\end{lemma}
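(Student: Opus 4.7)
The plan is to apply It\^{o}'s formula to $e^{\lambda t}\varphi_{\varepsilon}(Y_t^{\varepsilon})$ on $[a,T]$ and extract all three estimates from a single master inequality. Since $\varphi_{\varepsilon}$ is convex and $C^{1,1}$ with gradient $D\varphi_{\varepsilon}(u)=u-J_{\varepsilon}u$, the formula applies in the Hilbert setting, and its second-order Hessian correction --- a trace against the predictable quadratic variation of $Y^{\varepsilon}$, which splits into the $Z^{\varepsilon}\mathcal{Q}_s(Z^{\varepsilon})^{*}\,{\rm d}\langle M\rangle_s$ and $\,{\rm d}\langle N^{\varepsilon}\rangle_s$ pieces by the very strong orthogonality of $M$ and $N^{\varepsilon}$ --- is nonnegative by convexity and can be dropped. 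Substituting the dynamics of $Y^{\varepsilon}$ from Eq.~(\ref{eq3}) and taking expectation so that the martingale integrals against $M$ and $N^{\varepsilon}$ vanish gives
\begin{equation*}
\mathbb{E}\bigl[e^{\lambda a}\varphi_{\varepsilon}(Y_a^{\varepsilon})\bigr] + \frac{1}{\varepsilon}\mathbb{E}\int_a^T\! e^{\lambda s}|D\varphi_{\varepsilon}(Y_s^{\varepsilon})|^2\,{\rm d}s + \lambda\mathbb{E}\int_a^T\! e^{\lambda s}\varphi_{\varepsilon}(Y_s^{\varepsilon})\,{\rm d}s \leq \mathbb{E}\bigl[e^{\lambda T}\varphi_{\varepsilon}(\xi)\bigr] + \mathbb{E}\int_a^T\! e^{\lambda s}(D\varphi_{\varepsilon}(Y_s^{\varepsilon}),F(s,Y_s^{\varepsilon},Z_s^{\varepsilon}\mathcal{Q}_s^{1/2}))\,{\rm d}s.
\end{equation*}

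Next, I would force an explicit factor of $\varepsilon$ onto the right-hand side. The boundary term yields it via the variational identity $\varphi_{\varepsilon}(\xi)\leq\varepsilon\varphi(\xi)$, obtained by evaluating the infimum at $v=\xi$. For the $F$-cross-term, Young's inequality with weight $\varepsilon$ gives $(D\varphi_{\varepsilon},F)\leq\frac{1}{2\varepsilon}|D\varphi_{\varepsilon}|^{2}+\frac{\varepsilon}{2}|F|^{2}$, and the first piece is absorbed into the left-hand side. The remaining $\frac{\varepsilon}{2}\mathbb{E}\int e^{\lambda s}|F|^{2}\,{\rm d}s$ is dominated by $\varepsilon C\Gamma_{2}(a,T)$ using $|F(s,y,z)|\leq\eta_{s}+\gamma|y|+\beta\|z\|$, which is a consequence of (H2)(iv)--(v), together with the a priori estimates of Lemma~\ref{lemma1}; note that $|F(s,0,0)|\leq\eta_{s}$, so $\Gamma_{1}(a,T)\leq\Gamma_{2}(a,T)$.

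Collecting these steps produces the master inequality
\begin{equation*}
\mathbb{E}\bigl[e^{\lambda a}\varphi_{\varepsilon}(Y_a^{\varepsilon})\bigr]+\frac{1}{2\varepsilon}\mathbb{E}\int_a^T\! e^{\lambda s}|D\varphi_{\varepsilon}(Y_s^{\varepsilon})|^{2}\,{\rm d}s + \lambda\mathbb{E}\int_a^T\! e^{\lambda s}\varphi_{\varepsilon}(Y_s^{\varepsilon})\,{\rm d}s \leq \varepsilon C\Gamma_{2}(a,T).
\end{equation*}
For (\ref{eq5}), dividing the second term by $\varepsilon$ gives $\mathbb{E}\int e^{\lambda s}|\varepsilon^{-1}D\varphi_{\varepsilon}(Y_s^{\varepsilon})|^{2}\,{\rm d}s\leq 2C\Gamma_{2}(a,T)$. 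For (\ref{eq6}), use $\varphi(J_{\varepsilon}u)\leq\varphi_{\varepsilon}(u)/\varepsilon$, read off from the identity $\varphi_{\varepsilon}(u)=\frac{1}{2}|D\varphi_{\varepsilon}(u)|^{2}+\varepsilon\varphi(J_{\varepsilon}u)$ by dropping the nonnegative first summand: together with the bounds on $\mathbb{E}e^{\lambda a}\varphi_{\varepsilon}(Y_{a}^{\varepsilon})$ and on $\lambda\mathbb{E}\int e^{\lambda s}\varphi_{\varepsilon}(Y_{s}^{\varepsilon})\,{\rm d}s$ from the master inequality, this gives the claim. For (\ref{eq7}), the same identity yields $|Y_{a}^{\varepsilon}-J_{\varepsilon}Y_{a}^{\varepsilon}|^{2}=|D\varphi_{\varepsilon}(Y_{a}^{\varepsilon})|^{2}\leq 2\varphi_{\varepsilon}(Y_{a}^{\varepsilon})$, so $\mathbb{E}e^{\lambda a}|Y_{a}^{\varepsilon}-J_{\varepsilon}Y_{a}^{\varepsilon}|^{2}\leq 2\varepsilon C\Gamma_{2}(a,T)$, closing out the triple of estimates.

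The main obstacle will be calibrating the Young inequality: the weight on $(D\varphi_{\varepsilon},F)$ must be chosen so that just enough of $\varepsilon^{-1}|D\varphi_{\varepsilon}|^{2}$ is absorbed into the left-hand side while a genuine factor of $\varepsilon$ emerges on the right --- this is precisely what converts the $O(\varepsilon)$-bound on $\varphi_{\varepsilon}(Y_{a}^{\varepsilon})$ into the $O(1)$-bound on $\varphi(J_{\varepsilon}Y_{a}^{\varepsilon})$ required by (\ref{eq6}). A secondary check is that the stochastic integrals are true martingales so that their expectations actually vanish; this relies on the $L^{2}$-bounds of Lemma~\ref{lemma1} and the Lipschitz estimate $|D\varphi_{\varepsilon}(y)|\leq|y|$, valid because $\varphi(0)=0$ forces $J_{\varepsilon}0=0$.
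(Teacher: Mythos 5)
Your overall strategy --- a master inequality for $e^{\lambda t}\varphi_{\varepsilon}(Y_t^{\varepsilon})$, the identity $\varphi_{\varepsilon}(u)=\frac{1}{2}|D\varphi_{\varepsilon}(u)|^{2}+\varepsilon\varphi(J_{\varepsilon}u)$, the bound $\varphi_{\varepsilon}(\xi)\leq\varepsilon\varphi(\xi)$, Young's inequality with weight $\varepsilon$ absorbing half of $\frac{1}{\varepsilon}|D\varphi_{\varepsilon}|^{2}$, and Lemma \ref{lemma1} via $|F(s,y,z)|\leq\eta_s+\gamma|y|+\beta\|z\|$ --- is exactly the paper's. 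But there is a genuine quantitative gap at Eq.(\ref{eq7}). Your chain $|Y_a^{\varepsilon}-J_{\varepsilon}Y_a^{\varepsilon}|^{2}=|D\varphi_{\varepsilon}(Y_a^{\varepsilon})|^{2}\leq 2\varphi_{\varepsilon}(Y_a^{\varepsilon})$ together with $\mathbb{E}\,e^{\lambda a}\varphi_{\varepsilon}(Y_a^{\varepsilon})\leq\varepsilon C\Gamma_{2}(a,T)$ delivers $\mathbb{E}\,e^{\lambda a}|Y_a^{\varepsilon}-J_{\varepsilon}Y_a^{\varepsilon}|^{2}\leq 2\varepsilon C\Gamma_{2}(a,T)$, which is $O(\varepsilon)$, one full power of $\varepsilon$ short of the claimed $O(\varepsilon^{2})$; you cannot recover the missing factor from the master inequality, since $\varepsilon$ enters its right-hand side only once (through $\varphi_{\varepsilon}(\xi)\leq\varepsilon\varphi(\xi)$ and the Young weight). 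Writing $|Y_a^{\varepsilon}-J_{\varepsilon}Y_a^{\varepsilon}|^{2}=\varepsilon^{2}\bigl|\frac{1}{\varepsilon}D\varphi_{\varepsilon}(Y_a^{\varepsilon})\bigr|^{2}$ shows what Eq.(\ref{eq7}) actually requires: a \emph{pointwise-in-$a$} bound $\mathbb{E}\,e^{\lambda a}\bigl|\frac{1}{\varepsilon}D\varphi_{\varepsilon}(Y_a^{\varepsilon})\bigr|^{2}\leq C\Gamma_{2}(a,T)$ on the Yosida approximation itself, whereas your Eq.(\ref{eq5}) furnishes only the time-integrated version. That pointwise estimate is the substance of Proposition 2.2 of Pardoux and R\u{a}\c{s}canu, which the paper's proof invokes at exactly this point; your argument, as written, proves a strictly weaker statement and declares the triple closed.

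Two further points, one technical and one minor. First, your opening step applies It\^{o}'s formula to $\varphi_{\varepsilon}$, which is convex and $C^{1,1}$ but in an infinite-dimensional Hilbert space generally \emph{not} twice Fr\'{e}chet differentiable, so there is no ``Hessian correction'' to drop: the formula you invoke is not available at this regularity. The paper circumvents precisely this by the discretized subdifferential inequality $e^{\lambda s}\varphi_{\varepsilon}(Y_s^{\varepsilon})\geq(e^{\lambda s}-e^{\lambda r})\varphi_{\varepsilon}(Y_s^{\varepsilon})+e^{\lambda r}\varphi_{\varepsilon}(Y_r^{\varepsilon})+e^{\lambda r}(D\varphi_{\varepsilon}(Y_r^{\varepsilon}),Y_s^{\varepsilon}-Y_r^{\varepsilon})$ along a partition of mesh $1/n$, summing and letting $n\to\infty$: convexity replaces the second-order term and yields exactly your one-sided master inequality, so your intuition is right but the rigorous device must be substituted. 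Second, since only $\lambda>2\alpha+\beta^{2}$ with $\alpha\in\mathbb{R}$ is assumed, $\lambda$ may be negative, in which case keeping $\lambda\mathbb{E}\int_a^T e^{\lambda s}\varphi_{\varepsilon}\,{\rm d}s$ on the left no longer controls the $\varphi_{\varepsilon}$-integral needed for Eq.(\ref{eq6}); the paper handles this by folding $|\lambda|y$ into the Young step through $-\lambda\varphi_{\varepsilon}(y)\leq|\lambda|\varphi_{\varepsilon}(y)\leq|\lambda|(D\varphi_{\varepsilon}(y),y)$, a fix you should incorporate. Your observations that $|F(s,0,0)|\leq\eta_s$ (hence $\Gamma_{1}\leq\Gamma_{2}$) and that $|D\varphi_{\varepsilon}(y)|\leq|y|$ justifies the vanishing of the martingale terms are both correct and slightly more careful than the paper's own write-up.
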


\begin{proof}
Consider the subdifferential inequality below
\begin{eqnarray*}
{\rm e}^{\lambda s}\varphi_{\varepsilon}(Y_{s}^{\varepsilon})\geq ({\rm e}^{\lambda s}-{\rm e}^{\lambda r})\varphi_{\varepsilon}(Y_{s}^{\varepsilon})+{\rm e}^{\lambda r}\varphi_{\varepsilon}(Y_{r}^{\varepsilon})
+{\rm e}^{\lambda r}(D\varphi_{\varepsilon}(Y_{r}^{\varepsilon}),Y_{s}^{\varepsilon}-Y_{r}^{\varepsilon})
\end{eqnarray*}
for $s=t_{i+1}\wedge T$, $r=t_{i}\wedge T$, where $t=t_{0}<t_{1}<t_{2}<\cdots$ and $t_{i+1}-t_{i}=1/n$,
summing up over $i$, and passing to the limit as $n\rightarrow\infty$, $\forall t \in [0,T]$, we deduce it to be
\begin{eqnarray*}
&&{\rm e}^{\lambda t}\varphi_{\varepsilon}(Y_{t}^{\varepsilon})+\int_{t}^{T}\lambda{\rm e}^{\lambda s}\varphi_{\varepsilon}(Y_{s}^{\varepsilon})\,{\rm d}{s}
+\frac{1}{\varepsilon}\int_{t}^{T}{\rm e}^{\lambda s}|D\varphi_{\varepsilon}(Y_{s}^{\varepsilon})|^2\,{\rm d}{s}\\
&&\leq {\rm e}^{\lambda T}\varphi_{\varepsilon}(\xi)+\int_{t}^{T}(D\varphi_{\varepsilon}(Y_{s}^{\varepsilon}),
F(s,Y_{s}^{\varepsilon},Z_{s}^{\varepsilon}))\,{\rm d}{s}\\
&&~~~-\int_{t}^{T}{\rm e}^{\lambda s}(D\varphi_{\varepsilon}(Y_{s}^{\varepsilon}),Z_{s}^{\varepsilon}\,{\rm d}{M_s}).
\end{eqnarray*}
As a consequence, we get the result by Eq.(\ref{eq4}), as also as the Proposition 2.2 from Pardoux and Răşcanu \cite{Par98}. 
\begin{eqnarray*}
&&\frac{1}{2}|D\varphi_{\varepsilon}(y)|^{2}+\varepsilon\varphi(J_{\varepsilon}y)=\varphi_{\varepsilon}(y),
~~\varepsilon\varphi(J_{\varepsilon}y)\leq \varphi_{\varepsilon}(y),~~~~~~~~~~~~~~~~~~~~~~~~~~~~\\
&&-\lambda\varphi_{\varepsilon}(y)\leq |\lambda|\varphi_{\varepsilon}(y)\leq |\lambda|(D\varphi_{\varepsilon}(y),y),\\
&&\varphi_{\varepsilon}(\xi)\leq \varepsilon\varphi(\xi),\\
&& 0\le\varphi_{\varepsilon}(u)\le (D\varphi_{\varepsilon}(u),u),
\end{eqnarray*}

\begin{eqnarray*}
(D\varphi_{\varepsilon}(y),|\lambda|y+F(s,y,z))
&\leq&\frac{1}{2\varepsilon}|D\varphi_{\varepsilon}(y)|^{2}
+\frac{\varepsilon}{2}(|\lambda||y|+|F(s,y,z)|)^{2}\\
&\leq& \frac{1}{2\varepsilon}|D\varphi_{\varepsilon}(y)|^{2}+\varepsilon(|\lambda|^{2}|y|^{2}+|F(s,y,z)|^{2})\\
&\leq& \frac{1}{2\varepsilon}|D\varphi_{\varepsilon}(y)|^{2}+\varepsilon[|\lambda|^{2}|y|^{2}\\
&&+4(\beta^{2}\|z\|^{2}+\gamma^{2}|y|^{2}+\eta^{2}(s))].
\end{eqnarray*}
And the end outcome is obtained.
\end{proof}

\begin{lemma}\label{lemma3}
Assume that assumptions (H1)-(H5) are satisfied, then for any $\varepsilon,\varepsilon^\prime >0$,
\begin{eqnarray}
&&\mathbb{E}\Bigg[\int_{0}^{T}{\rm e}^{\lambda s}(|Y_s^\varepsilon-Y_{s}^{\varepsilon^\prime}|^{2}+
\|Z_{s}^{\varepsilon}\mathcal{Q}_{s}^{1/2}-Z_{s}^{\varepsilon^\prime}\mathcal{Q}_{s}^{1/2}\|^2)\,{\rm d}{s}\Bigg]
\leq (\varepsilon+\varepsilon^\prime)C\Gamma(T),\label{eq8}\\
&&\mathbb{E}\left(\sup_{0\leq t\leq T}{\rm e}^{\lambda t}|Y_s^\varepsilon-Y_{s}^{\varepsilon^\prime}|^{2}\right)
\leq (\varepsilon+\varepsilon^\prime)C\Gamma(T),\label{eq9}
\end{eqnarray}
where \begin{eqnarray*}\Gamma(T)=\mathbb{E}\Bigg[{\rm e}^{\lambda T}(|\xi|^{2}+\varphi(\xi))+\int_{0}^{T}|F(s,0,0)|^2\,{\rm d}{s}\Bigg].\end{eqnarray*}.
\end{lemma}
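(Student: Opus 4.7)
The plan is to apply Itô's formula to $e^{\lambda t}|Y_t^\varepsilon - Y_t^{\varepsilon'}|^2$ starting from the approximating equation (\ref{eq3}) written at levels $\varepsilon$ and $\varepsilon'$, and then exploit the almost-monotonicity bound (\ref{eq2}) on the difference of Yosida gradients together with the a priori estimates from Lemmas \ref{lemma1} and \ref{lemma2}.

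First I would subtract the two equations at levels $\varepsilon$ and $\varepsilon'$ to obtain a BSDE with terminal value zero for $\Delta Y := Y^\varepsilon - Y^{\varepsilon'}$, $\Delta Z := Z^\varepsilon - Z^{\varepsilon'}$ and $\Delta N := N^\varepsilon - N^{\varepsilon'}$, with driver $F(s,Y_s^\varepsilon,Z_s^\varepsilon \mathcal{Q}_s^{1/2}) - F(s,Y_s^{\varepsilon'},Z_s^{\varepsilon'}\mathcal{Q}_s^{1/2})$ and additional drift $\tfrac{1}{\varepsilon}D\varphi_\varepsilon(Y_s^\varepsilon) - \tfrac{1}{\varepsilon'}D\varphi_{\varepsilon'}(Y_s^{\varepsilon'})$. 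Applying Itô's formula to $e^{\lambda t}|\Delta Y_t|^2$ produces the quadratic-variation terms $\int_t^T e^{\lambda s}\|\Delta Z_s\mathcal{Q}_s^{1/2}\|^2\,{\rm d}s + \int_t^T e^{\lambda s}\,{\rm d}\langle \Delta N\rangle_s$ on the left, the cross-term
\begin{eqnarray*}
2\int_t^T e^{\lambda s}\left(\Delta Y_s,\ \tfrac{1}{\varepsilon}D\varphi_\varepsilon(Y_s^\varepsilon)-\tfrac{1}{\varepsilon'}D\varphi_{\varepsilon'}(Y_s^{\varepsilon'})\right){\rm d}s,
\end{eqnarray*}
and the $F$-difference term plus stochastic integrals with respect to $M$, $N^\varepsilon$, $N^{\varepsilon'}$.

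Next I would bound the $F$-difference term using (H2)(iii)--(iv) and Young's inequality to get $2(\Delta Y, \Delta F)\le (2\alpha+(1+r)\beta^2)|\Delta Y|^2 + \tfrac{1}{1+r}\|\Delta Z\mathcal{Q}^{1/2}\|^2$; choosing $r$ small and exploiting $\lambda>2\alpha+\beta^2$ (assumption (H4)(iii)) leaves strictly positive coefficients in front of $|\Delta Y|^2$ and $\|\Delta Z\mathcal{Q}^{1/2}\|^2$ on the left-hand side. For the subdifferential cross-term I would invoke (\ref{eq2}), which supplies a lower bound of the form $-(\varepsilon+\varepsilon')\,\bigl|\tfrac{1}{\varepsilon}D\varphi_\varepsilon(Y_s^\varepsilon)\bigr|\cdot\bigl|\tfrac{1}{\varepsilon'}D\varphi_{\varepsilon'}(Y_s^{\varepsilon'})\bigr|$; after taking expectation and using Cauchy--Schwarz together with (\ref{eq5}) from Lemma \ref{lemma2}, this contribution is controlled by $(\varepsilon+\varepsilon')\,C\Gamma(T)$. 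Since the $M$- and $\Delta N$-stochastic integrals are true martingales vanishing in expectation, taking $t=0$ and rearranging yields (\ref{eq8}).

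For the uniform estimate (\ref{eq9}) I would, before taking expectation, pass to $\sup_{0\le t\le T}$ and then to $\mathbb{E}$, and apply the Burkholder--Davis--Gundy inequality to the two martingale terms $\int_t^T e^{\lambda s}(\Delta Y_s,\Delta Z_s\,{\rm d}M_s)$ and $\int_t^T e^{\lambda s}(\Delta Y_s,{\rm d}\Delta N_s)$, exactly as in the proof of Lemma \ref{lemma1}. This produces terms of the form $\tfrac12\mathbb{E}\sup_{t}e^{\lambda t}|\Delta Y_t|^2 + C\,\mathbb{E}\int_0^T e^{\lambda s}\|\Delta Z_s \mathcal{Q}_s^{1/2}\|^2\,{\rm d}s + C\,\mathbb{E}\int_0^T e^{\lambda s}\,{\rm d}\langle\Delta N\rangle_s$; the first is absorbed into the left-hand side, and the last two are dominated by (\ref{eq8}).

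The main obstacle is the subdifferential cross-term: because $\partial\varphi$ is multivalued, standard monotonicity does not apply across different regularization levels, and one must rely on the slightly negative bound (\ref{eq2}), whose defect is exactly $(\varepsilon+\varepsilon')$. Turning this defect into the announced rate requires the uniform-in-$\varepsilon$ second-moment estimate on $\tfrac{1}{\varepsilon}D\varphi_\varepsilon(Y^\varepsilon)$ from Lemma \ref{lemma2}; without it, Cauchy--Schwarz would yield only an $O(1)$ bound. Once this is handled, the remaining steps are standard BSDE a priori estimation.
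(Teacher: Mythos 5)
Your proposal is correct and follows essentially the same route as the paper's own proof: It\^{o}'s formula for ${\rm e}^{\lambda t}|Y_t^\varepsilon-Y_t^{\varepsilon^\prime}|^2$, the monotonicity/Lipschitz bound on the driver difference with $\lambda>2\alpha+\beta^2$, the almost-monotonicity inequality (\ref{eq2}) for the Yosida gradients combined via Cauchy--Schwarz with the uniform estimate (\ref{eq5}) of Lemma \ref{lemma2} to extract the $(\varepsilon+\varepsilon^\prime)$ rate, and finally the Burkholder--Davis--Gundy inequality for the supremum bound (\ref{eq9}). Your rearranged form of the defect term, $(\varepsilon+\varepsilon^\prime)\bigl|\tfrac{1}{\varepsilon}D\varphi_\varepsilon(Y^\varepsilon_s)\bigr|\cdot\bigl|\tfrac{1}{\varepsilon^\prime}D\varphi_{\varepsilon^\prime}(Y^{\varepsilon^\prime}_s)\bigr|$, is algebraically identical to the paper's $\bigl(\tfrac{1}{\varepsilon}+\tfrac{1}{\varepsilon^\prime}\bigr)|D\varphi_\varepsilon(Y^\varepsilon_s)|\cdot|D\varphi_{\varepsilon^\prime}(Y^{\varepsilon^\prime}_s)|$, so the two arguments coincide.
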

\begin{proof}  Firstly, It\^{o}'s formula for ${\rm e}^{\lambda t}|Y_s^\varepsilon-Y_{s}^{\varepsilon^\prime}|^2$ yields
\begin{eqnarray}\label{eq10}
\begin{split}
&&{\rm e}^{\lambda t}|Y_t^\varepsilon-Y_{s}^{\varepsilon^\prime}|^2+\int_{t}^{T}{\rm e}^{\lambda s}(\lambda|Y_s^\varepsilon-Y_{s}^{\varepsilon^\prime}|^2 + \|Z_{s}^{\varepsilon}\mathcal{Q}_{s}^{1/2}-Z_{s}^{\varepsilon^\prime}\mathcal{Q}_{s}^{1/2}\|^2)\,{\rm d}s\\
&&+\int_{t}^{T}{\rm e}^{\lambda s}\,{\rm d}\langle N^{\varepsilon}-N^{\varepsilon^\prime}\rangle_s
+2\int_{t}^{T}{\rm e}^{\lambda s}(Y_s^\varepsilon-Y_{s}^{\varepsilon^\prime},
\frac{1}{\varepsilon}D\varphi_{\varepsilon}(Y_s^\varepsilon)
-\frac{1}{\varepsilon^\prime}D\varphi_{\varepsilon^\prime}(Y_{s}^{\varepsilon^\prime}))\,{\rm d}s\\
&&={\rm e}^{\lambda T}|\xi|^2
+2\int_{t}^{T}{\rm e}^{\lambda s}(Y_s^\varepsilon-Y_{s}^{\varepsilon^\prime},F(t,Y_{s}^{\varepsilon},Z_{s}^{\varepsilon}\mathcal{Q}_{s}^{1/2})-
F(t,Y_{s}^{\varepsilon},Z_{s}^{\varepsilon}\mathcal{Q}_{s}^{1/2}))\,{\rm d}{s}\\
&&~~~-2\int_{t}^{T}{\rm e}^{\lambda s}(Y_s^\varepsilon-Y_{s}^{\varepsilon^\prime},(Z_{s}^{\varepsilon}-Z_{s}^{\varepsilon^\prime})\,{\rm d}{M_s})
-2\int_{t}^{T}{\rm e}^{\lambda s}(Y_s^\varepsilon-Y_{s}^{\varepsilon^\prime},{\rm d}{N_s^\varepsilon}-{\rm d}{N_{s}^{\varepsilon^\prime}}).
\end{split}
\end{eqnarray}

Moreover,
\begin{eqnarray*}
&&2(Y_s^\varepsilon-Y_{s}^{\varepsilon^\prime},F(t,Y_{s}^{\varepsilon},Z_{s}^{\varepsilon}\mathcal{Q}_{s}^{1/2})-
F(t,Y_{s}^{\varepsilon^\prime},Z_{s}^{\varepsilon^\prime}\mathcal{Q}_{s}^{1/2}))
\\
&&\qquad
\leq (2\alpha+(1+r)\beta^{2})|Y_t^\varepsilon-Y_{s}^{\varepsilon^\prime}|^2 +\frac{1}{1+r}\|Z_{s}^{\varepsilon}\mathcal{Q}_{s}^{1/2}-Z_{s}^{\varepsilon^\prime}\mathcal{Q}_{s}^{1/2}\|^2
\end{eqnarray*}
 by Eq.(\ref{eq2}), hence,
\begin{eqnarray*}
&&{\rm e}^{\lambda t}|Y_t^\varepsilon-Y_{s}^{\varepsilon^\prime}|^2+\int_{t}^{T}{\rm e}^{\lambda s}\,{\rm d}\langle N^{\varepsilon}-N^{\varepsilon^\prime}\rangle_s\\
&&~~~+\int_{t}^{T}{\rm e}^{\lambda s}\Bigg[(\lambda -2\alpha-\beta^2 -r\beta^2)|Y_s^\varepsilon-Y_{s}^{\varepsilon^\prime}|^2 + \frac{r}{r+1}\|Z_{s}^{\varepsilon}\mathcal{Q}_{s}^{1/2}-Z_{s}^{\varepsilon^\prime}\mathcal{Q}_{s}^{1/2}\|^2\bigg]\,{\rm d}s\\
&&\leq 2\left(\frac{1}{\varepsilon}+\frac{1}{\varepsilon^\prime}\right)\int_{t}^{T}{\rm e}^{\lambda s}
|D\varphi_{\varepsilon}Y_t^\varepsilon|\times|D\varphi_{\varepsilon^\prime}Y_{s}^{\varepsilon^\prime}|\,{\rm d}s
-2\int_{t}^{T}{\rm e}^{\lambda s}(Y_s^\varepsilon-Y_{s}^{\varepsilon^\prime},Z_{s}^{\varepsilon}-Z_{s}^{\varepsilon^\prime}\,{\rm d}{M_s})\\
&&~~~-2\int_{t}^{T}{\rm e}^{\lambda s}(Y_s^\varepsilon-Y_{s}^{\varepsilon^\prime},{\rm d}{N_s^\varepsilon}-{\rm d}{N_{s}^{\varepsilon^\prime}}).
\end{eqnarray*}
Take expectations on both sides of the above inequation, and combine the inequation bellow from Lemma \ref{lemma2},
\begin{eqnarray*}
2\left(\frac{1}{\varepsilon}+\frac{1}{\varepsilon^\prime}\right) \mathbb{E}\int_{t}^{T}{\rm e}^{\lambda s}
|D\varphi_{\varepsilon}Y_t^\varepsilon|\times|D\varphi_{\varepsilon^\prime}Y_{s}^{\varepsilon^\prime}|\,{\rm d}s
\leq C(\varepsilon+\varepsilon^\prime)\Gamma(T).
\end{eqnarray*}
We can obtain
\begin{eqnarray*}
&&\mathbb{E}\Bigg[\int_{0}^{T}{\rm e}^{\lambda s}(|Y_s^\varepsilon-Y_{s}^{\varepsilon^\prime}|^{2}){\rm d}{s}\Bigg]
\leq C(\varepsilon+\varepsilon^\prime)\Gamma(T),
\end{eqnarray*}
On the other hand, on the basis of Eq.(\ref{eq10}), we obtain
\begin{eqnarray*}
&&\mathbb{E}\Bigg[{\rm e}^{\lambda s}\|Z_{s}^{\varepsilon}\mathcal{Q}_{s}^{1/2}-Z_{s}^{\varepsilon^\prime}\mathcal{Q}_{s}^{1/2}\|^2\,{\rm d}{s}\Bigg]
\leq C(\varepsilon+\varepsilon^\prime)\mathbb{E}\Bigg[\int_{t}^{T}{\rm e}^{\lambda s}|D\varphi_{\varepsilon}Y_t^\varepsilon|\times|D\varphi_{\varepsilon^\prime}Y_{s}^{\varepsilon^\prime}|\,{\rm d}s\Bigg],
\end{eqnarray*}
Indeed, it follows from Burkholder-Davis-Gundy's inequality the result can be obtained
\begin{eqnarray*}
&&\mathbb{E}\left(\sup_{0\leq t\leq T}{\rm e}^{\lambda t}|Y_s^\varepsilon-Y_{s}^{\varepsilon^\prime}|^{2}\right)
\leq (\varepsilon+\varepsilon^\prime)C\Gamma(T),
\end{eqnarray*}
We then complete the proof.
\end{proof}

\section{The existence and uniqueness of the solution}
\begin{lemma} \label{lemma4}
Let the assumptions (H1)-(H5) be satisfied, if $(Y,Z,U,N)$ is a solution for Eq.(\ref{eq1}), and $(Y^\prime,Z^\prime,U^\prime,N^\prime)$ as an alternative solution. Denote by $(\delta Y,\delta Z,\delta U,\delta N) \triangleq  (Y-Y^\prime,Z-Z^\prime,U-U^\prime,N-N^\prime)$, let $\lambda$ be a real number, then we have
\begin{eqnarray}
&&\mathbb{E}\int_{t}^{T}{\rm e}^{\lambda s}(|\delta Y_s|^2+\|\delta Z_{s}\mathcal{Q}_{s}^{1/2}\|^2)\,{\rm d}s=0,\label{eq11}\\
&&\mathbb{E}\left(\sup_{0\leq t\leq T}{\rm e}^{\lambda t}|\delta Y_t|^2\right)=0.\label{eq12}
\end{eqnarray}
\end{lemma}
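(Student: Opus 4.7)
The plan is to mimic the approach of Lemma~\ref{lemma3}, applied now to the difference of two putative solutions rather than to two penalized approximations. Subtracting the two copies of Eq.~\eqref{eq1} yields a BSDE for $(\delta Y,\delta Z,\delta U,\delta N)$ with zero terminal value, and I would apply It\^{o}'s formula to ${\rm e}^{\lambda t}|\delta Y_t|^2$ on $[t,T]$. The resulting identity contains the quadratic variation contributions $\int_t^T {\rm e}^{\lambda s}\|\delta Z_s\mathcal{Q}_s^{1/2}\|^2\,{\rm d}s$ and $\int_t^T {\rm e}^{\lambda s}\,{\rm d}\langle\delta N\rangle_s$, an ${\rm e}^{\lambda t}|\delta Y_t|^2$ term on the left, a drift term involving the difference of $F$'s, a subdifferential term $2\int_t^T {\rm e}^{\lambda s}(\delta Y_s,\delta U_s)\,{\rm d}s$, and two stochastic integrals driven by $M$ and $\delta N$. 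The boundary term at $T$ vanishes because both solutions share the same terminal value $\xi$.

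The crucial structural input is the monotonicity of the maximal monotone operator $\partial\varphi$: since $U_s\in\partial\varphi(Y_s)$ and $U'_s\in\partial\varphi(Y'_s)$, the pairing $(\delta Y_s,\delta U_s)\geq 0$, so that term sits on the favourable side of the inequality and is simply discarded. For the $F$ term I would apply (H2)(iii)--(iv) in exactly the Young's inequality form already used in Lemma~\ref{lemma1},
\begin{eqnarray*}
2(\delta Y_s, F(s,Y_s,Z_s\mathcal{Q}_s^{1/2})-F(s,Y'_s,Z'_s\mathcal{Q}_s^{1/2}))
\leq (2\alpha+(1+r)\beta^2)|\delta Y_s|^2 + \tfrac{1}{1+r}\|\delta Z_s\mathcal{Q}_s^{1/2}-\delta Z_s\mathcal{Q}_s^{1/2}\|^2,
\end{eqnarray*}
and then exploit the standing hypothesis $\lambda>2\alpha+\beta^2$ by choosing $r$ sufficiently small, so that the coefficients of $|\delta Y_s|^2$ and $\|\delta Z_s\mathcal{Q}_s^{1/2}\|^2$ inherited from the left-hand side are strictly positive.

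Taking expectations eliminates the two stochastic integrals (a localising sequence together with the $\mathcal{S}^2$/$\Lambda^2$ integrability of the solution ensures they are true, not merely local, martingales). What remains is an inequality of the form
\begin{eqnarray*}
\mathbb{E}\int_t^T {\rm e}^{\lambda s}\bigl(c_1|\delta Y_s|^2+c_2\|\delta Z_s\mathcal{Q}_s^{1/2}\|^2\bigr)\,{\rm d}s + \mathbb{E}\int_t^T {\rm e}^{\lambda s}\,{\rm d}\langle\delta N\rangle_s \leq 0,
\end{eqnarray*}
with $c_1,c_2>0$, which immediately yields \eqref{eq11} and, as a by-product, forces $\delta N\equiv 0$. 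For \eqref{eq12} I would return to the It\^{o} identity \emph{before} taking expectation, take $\sup_{0\le t\le T}$ on both sides, and then apply the Burkholder--Davis--Gundy inequality to the two martingale increments, absorbing $\tfrac{1}{2}\mathbb{E}\sup_t{\rm e}^{\lambda t}|\delta Y_t|^2$ into the left and bounding the remainder by the $\|\delta Z\mathcal{Q}^{1/2}\|^2$ and ${\rm d}\langle\delta N\rangle$ quantities already shown to vanish, exactly as in the proof of Lemma~\ref{lemma3}.

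The main technical nuisance I anticipate is the justification that the integrals against $M$ and against $\delta N$ are genuine martingales with zero expectation; this should follow from $Y,Y'\in\mathcal{S}^2(H)$, $Z,Z'\in\Lambda^2(H;\mathcal{P},M)$ and $N,N'\in\mathcal{M}_{[0,T]}^2(H)$ via a routine stopping-time argument, but it must be spelled out. Beyond that, no new tools are required, because the monotonicity of $\partial\varphi$ cleanly replaces the delicate cross-term $(1/\varepsilon+1/\varepsilon')|D\varphi_\varepsilon||D\varphi_{\varepsilon'}|$ that had to be controlled in the approximation regime.
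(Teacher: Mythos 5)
Your proposal follows essentially the same route as the paper's own proof of Lemma~\ref{lemma4}: It\^{o}'s formula for ${\rm e}^{\lambda t}|\delta Y_t|^2$, the monotonicity of $\partial\varphi$ giving $(\delta Y_s,\delta U_s)\geq 0$, the Young-type estimate on the $F$ difference under (H2)(iii)--(iv) with $\lambda>2\alpha+\beta^2$, and Burkholder--Davis--Gundy for the supremum bound \eqref{eq12} --- indeed you are more careful than the paper, which omits the localisation argument for the stochastic integrals and contains typos in the $F$ terms. The only blemish is a typo in your displayed inequality, where the last term should read $\tfrac{1}{1+r}\|\delta Z_s\mathcal{Q}_s^{1/2}\|^2$ rather than the vacuous $\|\delta Z_s\mathcal{Q}_s^{1/2}-\delta Z_s\mathcal{Q}_s^{1/2}\|^2$; your surrounding text makes clear the intended estimate is the correct one.
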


\begin{proof}   It\^{o}'s formula for ${\rm e}^{\lambda t}|\delta Y_t|^2$ yields
\begin{eqnarray*}
{\rm e}^{\lambda T}|\delta Y_T|^2-{\rm e}^{\lambda t}|\delta Y_t|^2
&=&\int_{t}^{T}{\rm e}^{\lambda s}(\lambda|\delta Y_s|^2 + \|\delta Z_{s}\mathcal{Q}_{s}^{1/2}\|^2)\,{\rm d}s+ \int_{t}^{T}{\rm e}^{\lambda s}\,{\rm d}\langle\delta N\rangle_s\\
&&+2\int_{t}^{T}{\rm e}^{\lambda s}(\delta Y_s,[F(s,Y_s,Z_{s}\mathcal{Q}_{s}^{1/2})-F(s,\delta Y_s,\delta Z_{s}\mathcal{Q}_{s}^{1/2})]\,{\rm d}s\\
&&~~~~~~~~~~~~~-|\delta U_s|\,{\rm d}s-|\delta Z_s|\,{\rm d}{M_s}-\,{\rm d}{\delta N_s}).
\end{eqnarray*}
Taking the expectation,
\begin{eqnarray*}
&&\mathbb{E}[{\rm e}^{\lambda t}|\delta Y_t|^2]+\mathbb{E}\int_{t}^{T}{\rm e}^{\lambda s}(\lambda|\delta Y_s|^2 + \|\delta Z_{s}\mathcal{Q}_{s}^{1/2}\|^2)\,{\rm d}s+\mathbb{E}\int_{t}^{T}{\rm e}^{\lambda s}\,{\rm d}\langle\delta N\rangle_s\\
&&~~~+2\mathbb{E}\int_{t}^{T}{\rm e}^{\lambda s}(\delta Y_t,\delta U_s)\,{\rm d}s\\
&&=2\mathbb{E}\int_{t}^{T}{\rm e}^{\lambda s}(\delta Y_t,[F(s,Y_s,Z_{s}\mathcal{Q}_{s}^{1/2})-F(s,\delta Y_s,\delta Z_{s}\mathcal{Q}_{s}^{1/2})])\,{\rm d}s.
\end{eqnarray*}
However,\\
$2(\delta Y_t,\delta U_s)\geq 0,$
\begin{eqnarray*}
(\delta Y_t,F(s,Y_s,Z_{s}\mathcal{Q}_{s}^{1/2})-F(s,\delta Y_s,\delta Z_{s}\mathcal{Q}_{s}^{1/2}))
&\leq&2\alpha|\delta Y_t|^2+\beta(|\delta Y_s|^2 + \|\delta Z_{s}\mathcal{Q}_{s}^{1/2}\|^2)\\
&\leq&(2\alpha+\beta^2 +r\beta^2)|\delta Y_t|^2
+\frac{1}{1+r}\|\delta Z_{s}\mathcal{Q}_{s}^{1/2}\|^2.
\end{eqnarray*}
Then, we obtain Eq.(\ref{eq11}). Finally we obtain Eq.(\ref{eq12}) by using the Burkholder-Davis-Gundy's inequality.
\end{proof}
\begin{theorem}\label{thm1}
Suppose that the conditions (H1)-(H5) hold, and then there will exist a unique quadruple  $(Y,Z,U,N)$ so that\\
$$\begin{array}{ll} 
 Y\in S_{[0,T]}^{2}(H),\ Z\in M_{[0,T]}^{2}(L^{2}(H)),\ U\in M_{[0,T]}^{2}(H),\ N\in M_{[0,T]}^{2}(H), \ \\
\end{array}$$
\begin{eqnarray}
&&\mathbb{E}\int_{t}^{T}{\rm e}^{\lambda s}\varphi(Y_s)\,{\rm d}s\leq\infty,\label{eq13}\\
&&\left\{
 \begin{array}{ll}
 \displaystyle Y_{t}+\int_{t}^{T}U_s\,{\rm d}s
= \xi+\int_{t}^{T}F(t,Y_{s},Z_{s}\mathcal{Q}_{s}^{1/2})\,{\rm d}{s}
-\int_{t}^{T}Z_{s}\,{\rm d}{M_s}-\int_{t}^{T}\,{\rm d}{N_s},\\
\\
 \displaystyle Y_t\in \,{\rm Dom}(\partial\varphi), \,{\rm and\ }   U_t\in \partial\varphi(Y_t).
\end{array}\right.\label{eq14}
  \end{eqnarray}
\end{theorem}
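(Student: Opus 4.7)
\medskip\noindent\textbf{Proof proposal.} Uniqueness falls out immediately: if $(Y,Z,U,N)$ and $(Y',Z',U',N')$ are two solutions of Eq.~(\ref{eq14}), Lemma~\ref{lemma4} forces $\delta Y \equiv 0$ in $S^{2}_{[0,T]}(H)$ and $\delta Z\,\mathcal{Q}^{1/2}\equiv 0$ in the appropriate norm; passing this information back into Eq.~(\ref{eq14}) then yields $\,{\rm d}N = -\delta U\,{\rm d}t$ on both sides, which combined with the orthogonality of the martingale part and the finite-variation part gives $\delta U \equiv 0$ and $\delta N \equiv 0$. So the real work is existence.

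\medskip\noindent For existence I would build $(Y,Z,U,N)$ as the limit of the Yosida-regularized solutions $(Y^{\varepsilon}, Z^{\varepsilon}, N^{\varepsilon})$ of Eq.~(\ref{eq3}). Lemma~\ref{lemma3} says $\{Y^{\varepsilon}\}$ is Cauchy in $S^{2}_{[0,T]}(H)$ and $\{Z^{\varepsilon}\mathcal{Q}^{1/2}\}$ is Cauchy in $L^{2}(\,{\rm d}\mathbb{P}\otimes\,{\rm d}\langle M\rangle)$, giving strong limits $Y\in S^{2}_{[0,T]}(H)$ and $Z\in M^{2}_{[0,T]}(L^{2}(H))$. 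Lemma~\ref{lemma2}, estimate (\ref{eq5}), bounds the Yosida derivative $U^{\varepsilon}_{s}:=\tfrac{1}{\varepsilon}D\varphi_{\varepsilon}(Y^{\varepsilon}_{s})$ uniformly in the weighted $L^{2}$ norm, so after extracting a subsequence we obtain a weak limit $U\in M^{2}_{[0,T]}(H)$. Defining
\begin{eqnarray*}
N_{t} := \xi - Y_{t} + \int_{0}^{t}\!F(s,Y_{s},Z_{s}\mathcal{Q}_{s}^{1/2})\,{\rm d}s - \int_{0}^{t}\!U_{s}\,{\rm d}s - \int_{0}^{t}\!Z_{s}\,{\rm d}M_{s},
\end{eqnarray*}
and passing to the limit in Eq.~(\ref{eq3}) (using the Lipschitz/monotone structure of $F$ from (H2) together with the isometry for stochastic integrals against $M$), the remainder term $N^{\varepsilon}\to N$ in $M^{2}_{[0,T]}(H)$, which yields the integral identity in Eq.~(\ref{eq14}).

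\medskip\noindent The main obstacle, as usual for BSVIs, will be identifying the limit $U$ as an element of $\partial\varphi(Y)$ pointwise in $(t,\omega)$. For this I would use estimate (\ref{eq7}) of Lemma~\ref{lemma2}, which gives $\mathbb{E}[\,{\rm e}^{\lambda a}|Y^{\varepsilon}_{a}-J_{\varepsilon}Y^{\varepsilon}_{a}|^{2}]\le\varepsilon^{2}C\Gamma_{2}(a,T)$, so along the chosen subsequence $J_{\varepsilon}Y^{\varepsilon}\to Y$ strongly as well. Since $U^{\varepsilon}_{s}\in\partial\varphi(J_{\varepsilon}Y^{\varepsilon}_{s})$ by the defining property of the Yosida approximation, and $\partial\varphi$ is maximal monotone (hence its graph is closed in the strong$\times$weak topology of $H\times H$), the pair $(Y_{s},U_{s})$ lies in the graph of $\partial\varphi$ almost everywhere. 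The standard argument is to write the monotonicity inequality
\begin{eqnarray*}
\mathbb{E}\!\int_{0}^{T}\!{\rm e}^{\lambda s}(U^{\varepsilon}_{s}-v,\,J_{\varepsilon}Y^{\varepsilon}_{s}-w)\,{\rm d}s \ \ge\ \mathbb{E}\!\int_{0}^{T}\!{\rm e}^{\lambda s}(\varphi(J_{\varepsilon}Y^{\varepsilon}_{s})-\varphi(w))\,{\rm d}s
\end{eqnarray*}
for arbitrary test pairs $(w,v)\in\partial\varphi$, take the limit $\varepsilon\downarrow 0$ using strong convergence of $J_{\varepsilon}Y^{\varepsilon}$ and weak convergence of $U^{\varepsilon}$, and conclude $U\in\partial\varphi(Y)$ by maximal monotonicity; in particular $Y_{s}\in\mathrm{Dom}(\partial\varphi)$ a.e. The integrability (\ref{eq13}) follows from (\ref{eq6}) of Lemma~\ref{lemma2} combined with Fatou's lemma and the lower semicontinuity of $\varphi$. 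This completes the proposal.
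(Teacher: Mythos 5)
Your proposal is correct in outline and follows the same skeleton as the paper's proof: uniqueness from Lemma \ref{lemma4}, existence by passing to the limit in the Yosida-penalized equation (\ref{eq3}) via the Cauchy estimates of Lemma \ref{lemma3}, the uniform bound (\ref{eq5}) on $U^{\varepsilon}=\tfrac{1}{\varepsilon}D\varphi_{\varepsilon}(Y^{\varepsilon})$, the closeness estimate (\ref{eq7}) for $J_{\varepsilon}Y^{\varepsilon}$, and (\ref{eq6}) plus lower semicontinuity for (\ref{eq13}). Where you genuinely diverge is the identification step $U\in\partial\varphi(Y)$. The paper works with the antiderivatives $\widehat{U}^{\varepsilon}_{t}=\int_{t}^{T}U^{\varepsilon}_{s}\,\mathrm{d}s$: it cites the Pardoux--R\u{a}\c{s}canu convergence result to get $\mathbb{E}\bigl[\sup_{t}|\widehat{U}^{\varepsilon}_{t}-\widehat{U}_{t}|^{2}\bigr]\to 0$, uses boundedness of $\widehat{U}^{\varepsilon}$ in $L^{2}(\Omega;H^{1}(0,T))$ to write $\widehat{U}_{t}=\int_{t}^{T}U_{s}\,\mathrm{d}s$, and then invokes Lemma 5.8 of Gegout-Petit and Pardoux \cite{Geg96} to pass to the limit in probability in $\int_{a}^{b}(U^{\varepsilon}_{t},V_{t}-Y^{\varepsilon}_{t})\,\mathrm{d}t$ for arbitrary random $a,b$ and $V\in M^{2}_{[a,b]}(H)$, concluding by a liminf in the subdifferential inequality. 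You instead extract a weak $L^{2}(\mathrm{d}\mathbb{P}\otimes\mathrm{d}t)$ limit of $U^{\varepsilon}$ along a subsequence and use demiclosedness (strong$\times$weak closedness of the graph) of the maximal monotone realization of $\partial\varphi$ in $L^{2}$. Your route is shorter and avoids the external lemma, at the cost of a subsequence extraction (harmless, since the limit is unique by Lemma \ref{lemma4}) and of the additional fact that the $L^{2}(\Omega\times[0,T];H)$-realization of $\partial\varphi$ is maximal monotone --- true here because, by (H3), it is the subdifferential of the proper convex l.s.c. integral functional $u\mapsto\mathbb{E}\int_{0}^{T}\varphi(u_{s})\,\mathrm{d}s$, but worth stating. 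Your uniqueness paragraph is in fact more complete than the paper's, which only cites Lemma \ref{lemma4}: once $\delta Y\equiv 0$ and $\delta Z\mathcal{Q}^{1/2}\equiv 0$, the identity $\int_{t}^{T}\delta U_{s}\,\mathrm{d}s=-(\delta N_{T}-\delta N_{t})$ exhibits $\delta N$ as a continuous finite-variation martingale, hence constant and zero, whence $\delta U\equiv 0$ --- that is the precise content of your ``orthogonality'' remark.

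One display in your identification step is wrong as written: for $(w,v)\in\partial\varphi$, the valid consequences of $U^{\varepsilon}_{s}\in\partial\varphi(J_{\varepsilon}Y^{\varepsilon}_{s})$ are the monotonicity inequality $(U^{\varepsilon}_{s}-v,\,J_{\varepsilon}Y^{\varepsilon}_{s}-w)\geq 0$ and the subgradient inequality $(U^{\varepsilon}_{s},\,J_{\varepsilon}Y^{\varepsilon}_{s}-w)\geq\varphi(J_{\varepsilon}Y^{\varepsilon}_{s})-\varphi(w)$; your hybrid $(U^{\varepsilon}_{s}-v,\,J_{\varepsilon}Y^{\varepsilon}_{s}-w)\geq\varphi(J_{\varepsilon}Y^{\varepsilon}_{s})-\varphi(w)$ is false in general (take $H=\mathbb{R}$, $\varphi(u)=\tfrac12 u^{2}$, $w=v=1$, $J_{\varepsilon}Y^{\varepsilon}_{s}=U^{\varepsilon}_{s}=2$: the left side is $1$, the right side is $3/2$). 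This is a local slip rather than a gap: your maximality argument only needs the $\geq 0$ version, and the $\varphi$-integrability (\ref{eq13}) is recovered separately from (\ref{eq6}) with Fatou and lower semicontinuity, exactly as the paper obtains it from (\ref{eq6}) and (\ref{eq11}).
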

\begin{proof}  Uniqueness can be proven by using Lemma \ref{lemma4} above. As a limit of the quadruple 
$(Y_s^\varepsilon,Z_s^\varepsilon,\frac{1}{\varepsilon}D\varphi_\varepsilon (Y_s^\varepsilon),N_s^\varepsilon)$, the existence of the solution $(Y,Z,U,N)$ is established.
The following result comes from Lemma \ref{lemma3}.
\begin{eqnarray*}
\begin{array}{ll}
\exists Y\in S_{[0,T]}^{2}(H), Z\in M_{[0,T]}^{2}(L^{2}(H)),\\\\
\lim\limits_{\varepsilon\to 0}Y^\varepsilon=Y \,{\rm in}\ S_{[0,T]}^{2}(H),\\\\
\lim\limits_{\varepsilon\to 0}Z^\varepsilon=Z \,{\rm in}\ M_{[0,T]}^{2}(L^{2}(H)),
\end{array}\end{eqnarray*}
and from Eqs.(\ref{eq5}) and (\ref{eq7}), for $\forall t\in [0,T]$, we have
\begin{eqnarray*}
\begin{array}{ll}
\lim\limits_{\varepsilon\to 0}J_{\varepsilon}(Y^\varepsilon)=Y \,{\rm in}\ S_{[0,T]}^{2}(H),\\\\
\lim\limits_{\varepsilon\to 0}\mathbb{E}({\rm e}^{\lambda t}|J_{\varepsilon}(Y_t^\varepsilon)-Y_t|^2)=0.
\end{array}\end{eqnarray*}
Eq.(\ref{eq13}) follow from Eq.(\ref{eq6}) and Eq.(\ref{eq11}).

For all $\varepsilon>0$, we let $U_t^\varepsilon=\frac{1}{\varepsilon}D\varphi_\varepsilon(Y_t^\varepsilon),$
as well as $\widehat{U}_t^\varepsilon=\int_{t}^{T}U_s^\varepsilon\,{\rm d}s. $
As a result of the convergence result which was presented by Pardoux and Rascanu \cite{Par98} and Eq.(\ref{eq13}), there exists a progressively measurable process $\{\widehat{U}_t,0\leq t\leq T\}$ so that for each $T>0$,
\begin{eqnarray*}
\mathbb{E}\left(\sup_{0\leq t\leq T}|\widehat{U}_t^\varepsilon-\widehat{U}_t|^2\right)\rightarrow 0,  \ \varepsilon\rightarrow 0.
\end{eqnarray*}
Furthermore, from Eq.(\ref{eq5}),
\begin{eqnarray*}
\sup_{\varepsilon>0}\mathbb{E}\int_{0}^{T}{\rm e}^{\lambda t}|U_t^\varepsilon|^2\,{\rm d}s<\infty.
\end{eqnarray*}

In the space $L^{2}(\Omega,H^{1}(0,T))$, $\widehat{U}^\varepsilon$ is bounded for all $T<0$, as well as $\lim\limits_{\varepsilon\to 0}\widehat{U}^\varepsilon=\widehat{U}$ in $L^{2}(\Omega,H^{1}(0,T))$, specifically, $\widehat{U}_t$ adopts the form $\widehat{U}_t=\int_{t}^{T}U_s\,{\rm d}s$, where $\{\widehat{U}_t,0\leq t\leq T\}$ is gradually measurable, and $\widehat{U}$ is completely continuous.

As well as from  Gegout-Petit and Pardoux's Lemma 5.8 \cite{Geg96}, for each $0\leq a\leq b\leq T$, $V\in M_{[a,b]}^{2}(H),$
\begin{eqnarray*}
\int_{a}^{b}(U_t^\varepsilon,V_t -Y_t^\varepsilon)\,{\rm d}t \rightarrow \int_{a}^{b}(U_t,V_t -Y_t)\,{\rm d}t
\end{eqnarray*}
in probability, and from Eq.(\ref{eq5}) $\int_{a}^{b}(U_t^\varepsilon,J_{\varepsilon}(Y_t^\varepsilon) -Y_t^\varepsilon)\,{\rm d}t \rightarrow 0.$ Now, since $U_t^\varepsilon\in \partial\varphi(J_{\varepsilon}(Y_t^\varepsilon))$,
\begin{eqnarray*}
\int_{a}^{b}(U_t^\varepsilon,V_t -J_{\varepsilon}(Y_t^\varepsilon))\,{\rm d}t
+ \int_{a}^{b}\varphi(J_{\varepsilon}(Y_t^\varepsilon))\,{\rm d}t
\leq \int_{a}^{b}\varphi(V_t)\,{\rm d}t.
\end{eqnarray*}
Taking the limit inferior in probability from the previous, we get 
\begin{eqnarray*}
\int_{a}^{b}(U_t,V_t -Y_t)\,{\rm d}t
+ \int_{a}^{b}\varphi(Y_t)\,{\rm d}t
\leq \int_{a}^{b}\varphi(V_t)\,{\rm d}t.
\end{eqnarray*}
While the constant $a,b$ and the process $V$ are random, Eq.(\ref{eq14}) has been proven.
\end{proof}

\section{Examples}
Consider Theorem 4.2 and Example 4.3 of Al-Hussein \cite{Al09}, the follow backward stochastic partial differential equation (BSPDE) has its solution $(Y,Z,N)$:
\begin{equation}\label{eq15}
\left\{
\begin{array}{ll}
  \displaystyle \,-{\rm d}Y_{t}=A\,{\rm d}{t}-Z_{t}\,{\rm d}{M_{t}}-\,{\rm d}{N_{t}},\ 0\leq t\leq T,\\
\\
 \displaystyle Y(T)=\xi.
\end{array} \right.
  \end{equation}
Here, we let $F(t,Y_{t},Z_{t}\mathcal{Q}_{t}^{1/2})=0$, and assume $A$ is a linear operator with no bounds from $\mathcal{D}(A)$ to $H$. If $A:V\to {V}^{'}$ is a continuous linear operator, Eq.(\ref{eq14}) has an unique solution.

Now, let $(\Omega, \mathcal{F}, \mathcal{F}_t, P)$ be a full probability space and $\mathcal{D} \subset \mathbb{R}^{d}$ be an open bounded subset with suitably smooth border $\partial(\mathcal{D})$. Let $M_t$ be martingales and set $H=H_1:=L^{2}(\mathcal{D})$.
And then, consider the BSPDE
\begin{equation}\label{eq16}
\left\{
\begin{array}{ll}
  \displaystyle \,-{\rm d}Y_{t}+ \partial j(Y_{t}){\rm d}{t}\ni A\,{\rm d}{t}+F(t,Y_{t},Z_{t}\mathcal{Q}_{t}^{1/2}){\rm d}{t}-Z_{t}\,{\rm d}{M_{t}}-\,{\rm d}{N_{t}},\ 0\leq t\leq T,\\
\\
 \displaystyle Y_t=0   \ \   \mbox{on}  \ \      \Omega\times[0,T]\times\partial(\mathcal{D}).
\end{array} \right.
  \end{equation}

Firstly, let us apply Theorem 3.2 of Maticiuc and R\u{a}\c{s}canu \cite{Mat15}, where $\varphi$ is a function from $L^{2}(\mathcal{D})\to\mathbb{R}$ which is provided bellow:
\begin{equation}
\varphi(u)=\left\{
\begin{array}{ll}
  \displaystyle \frac{1}{2}\int_{\mathcal{D}}|\nabla u(x)|^{2}\,{\rm d}{x}+\int_{\mathcal{D}} j(u(x))\,{\rm d}{x},\ \ \mbox{if}\ \  u\in H_{0}^{1}(\mathcal{D}),j(u)\in L^{1}(\mathcal{D})
\\
 \displaystyle +\infty,   \ \   \mbox{otherwise}. \ \      
\end{array} \right.
  \end{equation}

 Then, consider the Proposition 2.8 of \cite{Bar10}, the following properties hold:
 \begin{enumerate}
\item[(i)] $\varphi$ is a fuction what proper, convex as well as 1.s.c.,
\item[(ii)] $\partial\varphi(u)=\{  u^{\ast}\in L^{2}(\mathcal{D}):\ast\in \partial j(u(x))-\bigtriangleup u(x)\ \ a.e.\  \mbox{on}\ \  \mathcal{D}\}$, $\forall u\in \mbox{Dom}(\partial\varphi)$
\item[(iii)]$\mbox{Dom}(\partial\varphi)=\{u\in H^{1}(\mathcal{D})\bigcap H^{2}(\mathcal{D}): u(x)\in \mbox{Dom}(\partial j)\ \ a.e.\ \mbox{on} \ \mathcal{D}\}$,
\item[(iv)]$ \|u\|\leq C\|u^{*}\|,  \forall (u,u^{*})\in \partial\varphi$.
\end{enumerate}

Lastly, by applying Theorem \ref{thm1} from section 3, we decide that, under the above conditions, Eq.(\ref{eq16}) has an unique solution $(Y,Z,U,N)\in S^{2} (H)\times  M^{2}(L^{2}(H))\times  M^{2} (H)\times  M^{2} (H),\
$ such that $(Y_t,Z_t)=(\eta,0)$, where $\eta$ is a $H^1(\mathcal{D})$-valued random variable, $\mathcal{F}_t$-measurable and  
\begin{enumerate}
\item[(a)] $Y_{t}+\int_{t}^{T}U_s\,{\rm d}s
= \xi+\int_{t}^{T} AY_{s}\,{\rm d}{\mathcal{Q}_{s}}
-\int_{t}^{T}Z_{s}\,{\rm d}{M_s}-\int_{t}^{T}\,{\rm d}{N_s}\ \ a.s.$,

\item[(b)] $Y_{t}\in H^1(\mathcal{D}) \cap H^2(\mathcal{D})$,
\item[(c)] $Y_{t}(x)\in \mbox{Dom}(j)$,
\item[(d)] $U_{t}(x)\in \partial j(Y_{t}(x))$.
\end{enumerate}

\section{Conclusions}
The goal of this paper is to present and study a type of BSDEs, which is driven by infinite-dimensional martingales with subdifferential operators. We have shown that the adaptive solution of this BSDE exists and is unique. Additionally, we have presented a special example for the simple case. For future work we will focus on this interesting problem and pay more attention to the simulation of numerical solutions of BSDEs of multidimensional and even infinite-dimensional types, and their applications in finance and computing, such as \cite{Yu19, E17, Tak22}.

\section{Data Availability}
The data used to support the findings of this study are freely available.
\section{Conflicts of Interest}
The authors declare that they have no conflicts of interest.
\section{Acknowledgments}
The research was funded by Key scientific research
projects of Suzhou University (2019yzd08), Anhui Philosophy and Social Science Planning Project (AHSKQ2021D98), and University of Malaya research project (BKS073-2017).






\begin{table}[!htbp]
\end{table}

\begin{figure}[!htbp]
\end{figure}




\begin{thebibliography}{00}

\bibitem{Par90}
Pardoux, E.; Peng, S. Adapted solution of a backward stochastic differential equation. {\em Systems Control Lett.} {\bf 1990}, {\em 14}, 55--61. \href{
 https://doi.org/10.1016/0167-6911(90)90082-6}{[CrossRef]}


\bibitem{Kar97}
Karoui, N. El; Peng, S.; Quenez, M. C. Backward stochastic differential equations in finance. {\em Math. Financ.} {\bf 1997}, {\em 7}, 1--71. \href{https://doi.org/10.1111/1467-9965.00022 }{[CrossRef]}


\bibitem{Ham95}
Hamadene, S.; Lepeltier, J. P. Backward equations, stochastic control and zero-sum stochastic differential games. {\em Stochastics} {\bf 1995}, {\em 54}, 221--231. \href{https://doi.org/10.1080/17442509508834006 }{[CrossRef]}

\bibitem{Pen97}
Peng, S. The Backward Stochastic Differential Equation and Its Application. {\em Adv. Math.(China)} {\bf 1997}, {\em 26}, 97--112.

\bibitem{Ren06}
Ren, Y.; Xia, N. Generalized reflected BSDE and an obstacle problem for PDEs with a nonlinear Neumann boundary condition. {\em Stoch. Anal. Appl. } {\bf 2006}, {\em 24}, 1013--1033. \href{ https://doi.org/10.1080/07362990600870454 }{[CrossRef]}

\bibitem{Par98}
Pardoux, E.; Răşcanu, A. Backward stochastic differential equations with subdifferential operator and related variational inequalities. {\em Stochastic Process. Appl.} {\bf 1998}, {\em 76}, 191--215. \href{ https://doi.org/10.1016/s0304-4149(98)00030-1 }{[CrossRef]}


\bibitem{Par99}
Pardoux, E.; Răşcanu, A. Backward stochastic variational inequalities. {\em Stochastics.} {\bf 1999}, {\em 67}, 159--167. \href{https://doi.org/10.1080/17442509908834208 }{[CrossRef]}


\bibitem{Dio14}
Diomande, B.; Maticiuc, L. Multivalued backward stochastic differential equations with time delayed generators. {\em Open Math.} {\bf 2014}, {\em 12}, 1624--1637. \href{https://doi.org/10.2478/s11533-014-0434-x }{[CrossRef]}

\bibitem{Mat12}
Maticiuc, L.; Rotenstein, E. Numerical schemes for multivalued backward stochastic differential systems. {\em Cent. Eur. J. Math.} {\bf 2012}, {\em 10}, 693--702. \href{https://doi.org/10.2478/s11533-011-0131-y }{[CrossRef]}

\bibitem{Bou07}
Boufoussi, B.; Casteren, J.; Mrhardy, N. Generalized backward doubly stochastic differential equations and SPDEs with nonlinear Neumann boundary conditions.  {\em Bernoulli } {\bf 2007}, {\em 13 }, 423–446. \href{ http://dx.doi.org/10.3150/07-BEJ5092 }{[CrossRef]}

\bibitem{Wang22}
Wang, T.; Yu, J. Anticipated generalized backward doubly stochastic differential equations. {\em Symmetry } {\bf 2022}, {\em 14 }, 423–446. \href{ https://www.mdpi.com/2073-8994/14/1/114 }{[CrossRef]}

\bibitem{Yan17}
Yang, F.; Ren, Y.; Hu, L. Multi‐valued backward stochastic differential equations driven by G‐Brownian motion and its applications. {\em Math. Methods Appl. Sci.} {\bf 2017}, {\em 40}, 4696--4708. \href{10.1002/mma.4335}{[CrossRef]}

\bibitem{Bri03}
Briand, P.;Delyon,  B.; Hu, Y.; Pardoux E.; Stoica, L. $L^p$ solutions of backward stochastic differential equations. {\em Stochastic Process. Appl.} {\bf 2003}, {\em 108}, 109--129. \href{https://doi.org/10.1016/s0304-4149(03)00089-9 }{[CrossRef]}

\bibitem{Fan16}
Fan, S. J. Existence of solutions to one-dimensional BSDEs with semi-linear growth and general growth generators. {\em Stat. Probab. Lett.} {\bf 2016}, {\em 109}, 7--15. \href{https://doi.org/10.1016/j.spl.2015.10.012}{[CrossRef]}

\bibitem{Bah02}
Bahlali, K. Existence and uniqueness of solutions for BSDEs with locally Lipschitz coefficient. {\em Electron. Commun. Probab.} {\bf 2002}, {\em 7}, 169--179. \href{https://doi.org/10.1214/ecp.v7-1058 }{[CrossRef]}

\bibitem{Mat15}
Maticiuc, L. ;Răşcanu, A. Backward stochastic variational inequalities on random interval. {\em Bernoulli.} {\bf 2015}, {\em }, 1166--1199. \href{https://doi.org/10.3150/14-bej601 }{[CrossRef]}

\bibitem{Ras18}
Răşcanu, A. $L^{p}$ - variational solution of backward stochastic differential equation driven by subdifferential operators on a deterministic interval time. {\em arXiv} {\bf 2018}, arXiv: 1810.11247. \href{https://arxiv.org/abs/1810.11247}{[CrossRef]} 

\bibitem{Ham20}
Hamaguchi, Y. Bsdes driven by cylindrical martingales with application to approximate hedging in bond markets. {\em Jpn. J. Ind. Appl. Math.} {\bf 2020}, 1-29.\href{https://doi.org/10.1007/s13160-020-00442-y}{[CrossRef]}



\bibitem{Al09}
Al-Hussein, A. Backward stochastic partial differential equations driven by infinite-dimensional martingales and applications. {\em Stochastics.} {\bf 2009}, {\em 81},601--626. \href{https://doi.org/10.1080/17442500903370202 }{[CrossRef]}


\bibitem{Nie15}
Nie, T. Y. Forward-backward stochastic differential equation with subdifferential operator and associated variational inequality. {\em Science China.} {\bf 2015}, {\em 58}, 729--748. \href{https://doi.org/10.1007/s11425-014-4887-y }{[CrossRef]}

\bibitem{Al11}
Al-Hussein, A. Bsdes driven by infinite dimensional martingales and their applications to stochastic optimal control. {\em Random Oper. Stoch. Equ.} {\bf 2011}, {\em 19},45--61. \href{
 https://doi.org/10.1515/rose.2011.002}{[CrossRef]}

\bibitem{Bar76}
Barbu, V. \textit{Nonlinear semigroups and differential equations in Banach spaces}, 1 st ed.; Editura Academiei: Berlin, 1976; ISBN 978-90-286-0205-2.

\bibitem{Geg96}
Gegout-Petit, A.; Pardoux, E. Equations différentielles stochastiques rétrogrades réfléchies dans un convexe. {\em Stochastics} {\bf 1996}, {\em 57 }, 111--128. \href{ https://doi.org/10.1080/17442509608834054 }{[CrossRef]}

\bibitem{Bar10}
Barbu, V. \textit{Nonlinear differential equations of monotone types in Banach spaces}, 1 st ed.; Springer: New York, 2010; ISBN 978-1-4419-5542-5.


\bibitem{Yu19}
Yu, B.; Xing,X.; Sudjianto, A. Deep-learning based numerical BSDE method for barrier options. {\em arXiv} {\bf 2019}, arXiv: 1904.05921. \href{https://doi.org/10.48550/arXiv.1904.05921.}{[CrossRef]}


\bibitem{E17}
E.W,; Han, J.; Jentzen, A. Deep Learning-Based Numerical Methods for High-Dimensional Parabolic Partial Differential Equations and Backward Stochastic Differential Equations. {\em Commun. Math. Stat.} {\bf 2017}, {\em 5}, 349–380. \href{https://doi.org/10.1007/s40304-017-0117-6 }{[CrossRef]}


\bibitem{Tak22}
Takahashi, A.; Tsuchida, Y.; Yamada, T. A new efficient approximation scheme for solving high-dimensional semilinear PDEs: Control variate method for Deep BSDE solver. {\em J. Comput. Phys.} {\bf 2022}, {\em 454}, 110956. \href{https://doi.org/10.1016/j.jcp.2022.110956}{[CrossRef]}

 
 \end{thebibliography}
  

\end{document}